\newcommand{\email}[1]{\href{mailto:#1}{\nolinkurl{#1}}}
\definecolor{labelkey}{rgb}{0,0.08,0.45}
\definecolor{refkey}{rgb}{0,0.6,0.0}
\definecolor{Brown}{rgb}{0.45,0.0,0.05}
\definecolor{dgreen}{rgb}{0.00,0.49,0.00}
\definecolor{dblue}{rgb}{0,0.08,0.75}
\renewcommand{\leq}{\ensuremath{\leqslant}}
\renewcommand{\geq}{\ensuremath{\geqslant}}
\renewcommand{\le}{\ensuremath{\leqslant}}
\newcommand{\minimize}[2]{\ensuremath{\underset{\substack{{#1}}}%
{\text{\rm minimize}}\;\;#2}}
\newcommand{\EC}[2]{{\mathsf E}(#1\!\mid\! #2)} 
\newcommand{\EEC}[2]{{\mathsf E}\bigg(#1\!\:\Big|\: #2\bigg)}
\newcommand{\scal}[2]{{\left\langle{{#1}\mid{#2}}\right\rangle}}
\newcommand{\menge}[2]{\big\{{#1}~\big |~{#2}\big\}} 
\newcommand{\Menge}[2]{\left\{{#1}~\Big|~{#2}\right\}} 
\newcommand{\HHH}{{\ensuremath{\boldsymbol{\mathsf H}}}}
\newcommand{\HH}{\ensuremath{{\mathsf H}}}
\newcommand{\FF}{\ensuremath{{\EuScript F}}}
\newcommand{\emp}{\ensuremath{{\varnothing}}}
\newcommand{\Id}{{\sf{Id}}\,}
\newcommand{\ID}{{\boldsymbol{\sf{Id}}\,}}
\newcommand{\Cart}{\ensuremath{\raisebox{-0.5mm}{\mbox{\huge{$\times$}}}}}
\newcommand{\RR}{\ensuremath{\mathbb{R}}}
\newcommand{\RP}{\ensuremath{\left[0,+\infty\right[}}
\newcommand{\RPP}{\ensuremath{\left]0,+\infty\right[}}
\newcommand{\RX}{\ensuremath{\left]-\infty,+\infty\right]}}
\newcommand{\EE}{\ensuremath{\mathsf E}}
\newcommand{\PP}{\ensuremath{\mathsf P}}
\newcommand{\as}{\ensuremath{\text{\rm $\PP$-a.s.}}}
\newcommand{\NN}{\ensuremath{\mathbb N}}
\newcommand{\exi}{\ensuremath{\exists\,}}
\newcommand{\zer}{\ensuremath{\text{\rm zer}\,}}
\newcommand{\pinf}{\ensuremath{{+\infty}}}
\newcommand{\prox}{\ensuremath{\text{\rm prox}}}
\newcommand{\Fix}{\ensuremath{\text{\rm Fix}\,}}
\newcommand{\zeroun}{\ensuremath{\left]0,1\right[}}   
\newcommand{\rzeroun}{\ensuremath{\left]0,1\right]}}   
\newcommand{\lzeroun}{\ensuremath{\left[0,1\right[}}   
\newtheorem{theorem}{Theorem}[section]
\newtheorem{lemma}[theorem]{Lemma}
\newtheorem{corollary}[theorem]{Corollary}
\theoremstyle{plain}{\theorembodyfont{\rmfamily}%
}
\theoremstyle{plain}{\theorembodyfont{\rmfamily}%
}
\theoremstyle{plain}{\theorembodyfont{\rmfamily}%
\newtheorem{algorithm}[theorem]{Algorithm}}
\theoremstyle{plain}{\theorembodyfont{\rmfamily}%
\newtheorem{problem}[theorem]{Problem}}
\theoremstyle{plain}{\theorembodyfont{\rmfamily}%
\newtheorem{example}[theorem]{Example}}
\theoremstyle{plain}{\theorembodyfont{\rmfamily}%
\newtheorem{remark}[theorem]{Remark}}
\theoremstyle{plain}{\theorembodyfont{\rmfamily}%
}
\theoremstyle{plain}{\theorembodyfont{\rmfamily}%
}
\numberwithin{equation}{section}
\begin{document}
\title{\sffamily Stochastic Quasi-Fej\'er Block-Coordinate Fixed 
Point Iterations With Random Sweeping II: Mean-Square and Linear
Convergence\thanks{Contact author: 
P. L. Combettes, \email{plc@math.ncsu.edu}, phone:+1 (919)
515-2671. The work of P. L. Combettes was partially supported by
the National Science Foundation under grant CCF-1715671.}}
\author{Patrick L. Combettes$^{1}$ and 
Jean-Christophe Pesquet$^2$
\\[5mm]
\small
\small $\!^1$North Carolina State University\\
\small Department of Mathematics\\
\small Raleigh, NC 27695-8205, USA\\
\small\email{plc@math.ncsu.edu}\\
\\[2mm]
\small
\small $\!^2$CentraleSup\'elec, Universit\'e Paris-Saclay\\
\small Center for Visual Computing\\
\small 92295 Ch\^atenay-Malabry, France\\
\small \email{jean-christophe@pesquet.eu}
}
\date{~}

\thispagestyle{empty}

\maketitle

\vskip 8mm

\begin{abstract}
Reference \cite{Siop15} investigated the almost sure weak
convergence of block-coordinate fixed point algorithms and
discussed their applications to nonlinear analysis and optimization.
This algorithmic framework features random sweeping rules to select
arbitrarily
the blocks of variables that are activated over the course of the
iterations and it allows for stochastic errors in the evaluation of
the operators. The present paper establishes results on the
mean-square and linear convergence of the iterates. Applications to
monotone operator splitting and proximal optimization algorithms
are presented.  
\end{abstract} 

{\bfseries Keywords.} Block-coordinate algorithm,
fixed-point algorithm,
mean-square convergence,
monotone operator splitting,
linear convergence,
stochastic algorithm
\newpage

\section{Introduction}

In \cite{Siop15}, we investigated the asymptotic behavior 
of abstract stochastic quasi-Fej\'er fixed point iterations in
a Hilbert space $\HHH$ and applied these results
to establish almost sure convergence properties for randomly 
activated block-coordinate, stochastically perturbed extensions of 
algorithms employed in fixed point theory, monotone operator 
splitting, and optimization. The basic property of the operators 
used in the underlying model was that of quasinonexpansiveness.
Recall that an operator $\boldsymbol{\mathsf{T}}\colon\HHH\to\HHH$
with fixed point set $\Fix\boldsymbol{\mathsf{T}}$ is 
quasinonexpansive if 
\begin{equation}
\label{e:quasi}
(\forall\boldsymbol{\mathsf{z}}\in\Fix
\boldsymbol{\mathsf{T}})(\forall\boldsymbol{\mathsf{x}}\in\HHH)
\quad\|\boldsymbol{\mathsf{T}}\boldsymbol{\mathsf{x}}-
\boldsymbol{\mathsf{z}}\|\leq\|\boldsymbol{\mathsf{x}}-
\boldsymbol{\mathsf{z}}\|,
\end{equation}
and strictly quasinonexpansive if the above inequality is strict
whenever $\boldsymbol{\mathsf{x}}\not\in\Fix\boldsymbol{\mathsf{T}}$ 
\cite{Livre1}. The fixed point problem under investigation in
\cite{Siop15} was the following. 

\begin{problem}
\label{prob:0}
Let $(\HH_i)_{1\leq i\leq m}$ be separable real Hilbert spaces and 
let $\HHH=\HH_1\oplus\cdots\oplus\HH_m$ be their direct Hilbert sum.
For every $n\in\NN$, let 
$\boldsymbol{\mathsf T}_{\!n}\colon\HHH\to\HHH\colon
\boldsymbol{\mathsf{x}}\mapsto({\mathsf T}_{\!i,n}\,
\boldsymbol{\mathsf{x}})_{1\leq i\leq m}$ be a quasinonexpansive 
operator where, for every $i\in\{1,\ldots,m\}$, 
${\mathsf T}_{\!i,n}\colon\HHH\to\HH_i$ is measurable. Suppose
that $\boldsymbol{\mathsf F}=\bigcap_{n\in\NN}
\Fix\boldsymbol{\mathsf T}_{\!n}\neq\emp$. The problem is to find
a point in $\boldsymbol{\mathsf F}$.
\end{problem}

In \cite{Siop15}, Problem~\ref{prob:0} was solved via the
following block-coordinate algorithm. The main advantages of
a block-coordinate strategy is to reduce the computational load
and the memory requirements per iteration. In addition, our
approach adopts random sweeping rules to select arbitrarily the
blocks of variables that are activated at each iteration, and 
it allows for stochastic errors in the implementation of the 
operators.

\begin{algorithm} 
\label{algo:1}
Let $(\lambda_n)_{n\in\NN}$ be a sequence in $\rzeroun$ and set
$\mathsf{D}=\{0,1\}^m\smallsetminus\{\boldsymbol{\mathsf{0}}\}$. 
Let $\boldsymbol{x}_0$ and $(\boldsymbol{a}_n)_{n\in\NN}$ 
be $\HHH$-valued random variables, and let 
$(\boldsymbol{\varepsilon}_n)_{n\in\NN}$ be identically distributed 
$\mathsf{D}$-valued random variables. Iterate
\begin{equation}
\label{e:2014}
\begin{array}{l}
\text{for}\;n=0,1,\ldots\\
\left\lfloor
\begin{array}{l}
\text{for}\;i=1,\ldots,m\\
\left\lfloor
\begin{array}{l}
x_{i,n+1}=x_{i,n}+\varepsilon_{i,n}\lambda_n\big(
{\mathsf T}_{\!i,n}\,(x_{1,n},\ldots,x_{m,n})+a_{i,n}-x_{i,n}\big).
\end{array} 
\right.
\end{array} 
\right.
\end{array} 
\end{equation}
\end{algorithm}

At iteration $n$ of Algorithm~\ref{algo:1}, 
$\lambda_n\in\rzeroun$ is a relaxation
parameter, $a_{i,n}$ an $\HH_i$-valued random variable modeling some
stochastic error in the application of the operator 
$\mathsf{T}_{\!i,n}$,
and $\varepsilon_{i,n}$ an $\{0,1\}$-valued random variable that
signals the activation of the $i$th block 
${\mathsf T}_{\!i,n}$ of the operator 
$\boldsymbol{\mathsf T}_{\!n}$. Almost sure  
weak and strong convergence properties of this scheme were
established in \cite{Siop15}. In the present paper, we
complement these results by proving mean-square and linear
convergence properties for the orbits of \eqref{e:2014} 
under the additional
assumption that each operator $\boldsymbol{\mathsf T}_{\!n}$ in 
Problem~\ref{prob:0} satisfies the property
\begin{equation}
\label{e:squasi}
(\exi\tau_n\in\lzeroun)(\forall\boldsymbol{\mathsf{z}}\in\Fix
\boldsymbol{\mathsf{T}}_{\!n})(\forall\boldsymbol{\mathsf{x}}
\in\HHH)
\quad\|\boldsymbol{\mathsf{T}}_{\!n}\boldsymbol{\mathsf{x}}-
\boldsymbol{\mathsf{z}}\|\leq\sqrt{\tau_n}
\|\boldsymbol{\mathsf{x}}-\boldsymbol{\mathsf{z}}\|,
\end{equation}
which implies that $\boldsymbol{\mathsf{T}}_{\!n}$ is 
strictly quasinonexpansive and that
$\Fix\boldsymbol{\mathsf{T}}_{\!n}$ is a
singleton. Our results appear to be the first of this kind
regarding the block-coordinate algorithm \eqref{e:2014}, even
in the case of a single-block, when it reduces to the
stochastically perturbed iteration
\begin{equation}
\label{e:2014-}
\begin{array}{l}
\text{for}\;n=0,1,\ldots\\
\left\lfloor
\begin{array}{l}
x_{n+1}=x_n+\lambda_n\big({\mathsf T}_{\!n}x_n+a_{n}-x_n\big),
\end{array} 
\right.
\end{array} 
\end{equation}
special cases of which are studied in \cite{Atch17,Pafa16,Rosa16}.

The problem we address is more precisely described as
follows.

\begin{problem}
\label{prob:1}
Let $(\HH_i)_{1\leq i\leq m}$ be separable real Hilbert spaces,
set $\HHH=\HH_1\oplus\cdots\oplus\HH_m$, and let 
$\{\tau_{i,n}\}_{1\leq i\leq m}\subset\lzeroun$.
For every $n\in\NN$, let 
$\boldsymbol{\mathsf T}_{\!n}\colon\HHH\to\HHH\colon
\boldsymbol{\mathsf x}\mapsto({\mathsf T}_{\!i,n}\,
\boldsymbol{\mathsf x})_{1\leq i\leq m}$ be measurable and
quasinonexpansive with common fixed point 
$\overline{\boldsymbol{\mathsf{x}}}=
(\overline{\mathsf{x}}_i)_{1\leq i\leq m}$, and such that
\begin{equation}
\label{e:1}
(\forall n\in\NN)(\forall\boldsymbol{\mathsf{x}}\in\HHH)
\quad\|\boldsymbol{\mathsf{T}}_{\!n}\boldsymbol{\mathsf{x}}-
\overline{\boldsymbol{\mathsf{x}}}\|^2\leq\sum_{i=1}^m
\tau_{i,n}\|\mathsf{x}_i-\overline{\mathsf{x}}_i\|^2.
\end{equation}
The problem is to find $\overline{\boldsymbol{\mathsf{x}}}$.
\end{problem}

The proposed mean-square convergence
results are the most comprehensive available to date for stochastic
block-iterative fixed point methods at the level of generality
and flexibility of Algorithm \eqref{e:2014}. Special cases
concerning finite-dimensional minimization problems involving a
smooth function with restrictions in the implementation of 
\eqref{e:2014} are discussed in \cite{Nest12,Rich14,Rich16}.

The remainder of the paper consists of 3 sections. In
Section~\ref{sec:2}, we provide our notation and preliminary
results. Section~\ref{sec:3} is dedicated to the mean-square
convergence analysis of Algorithm~\ref{algo:1} and it discusses 
its linear convergence properties. Applications are presented in 
Section~\ref{sec:4}.

\section{Notation, background, and preliminary results}
\label{sec:2}

{\bfseries Notation.}
$\HH$ is a separable real Hilbert space with 
scalar product $\scal{\cdot}{\cdot}$, associated norm 
$\|\cdot\|$, Borel $\sigma$-algebra $\mathcal{B}$, and identity
operator $\Id$. The underlying probability space is 
$(\Omega,\FF,\PP)$. A $\HH$-valued random variable is a measurable 
map $x\colon(\Omega,\FF)\to(\HH,\mathcal{B})$ \cite{Fort95,Ledo91}.
The $\sigma$-algebra generated by a family $\Phi$ of 
random variables is denoted by $\sigma(\Phi)$.
Let $\mathscr{F}=(\FF_n)_{n\in\NN}$ be a sequence of 
sub-sigma algebras of $\FF$ such that $(\forall n\in\NN)$ 
$\FF_n\subset\FF_{n+1}$.
We denote by $\ell_+(\mathscr{F})$ the set of sequences 
of $\RP$-valued random variables $(\xi_n)_{n\in\NN}$ such that,
for every $n\in\NN$, $\xi_n$ is $\FF_n$-measurable. We set
\begin{equation}
\label{e:2013-11-13}
(\forall p\in\RPP)\quad\ell_+^p(\mathscr{F})=
\Menge{(\xi_n)_{n\in\NN}\in\ell_+(\mathscr{F})}
{\sum_{n\in\NN}\xi_n^p<\pinf\quad\as}.
\end{equation}

\begin{lemma}
\label{l:2016}
Let $\mathscr{F}=(\FF_n)_{n\in\NN}$ be a sequence of sub-sigma 
algebras of $\FF$ such that 
$(\forall n\in\NN)$ $\FF_n\subset\FF_{n+1}$. Let
$(\alpha_n)_{n\in\NN}\in\ell_+({\mathscr{F}})$, 
let $(\vartheta_n)_{n\in\NN}\in\ell_+({\mathscr{F}})$, 
let $(\eta_n)_{n\in\NN}\in\ell_+({\mathscr{F}})$, and 
suppose that there exists a sequence $(\chi_n)_{n\in\NN}$ in
$\RP$ such that $\varlimsup\chi_n<1$ and
\begin{equation}
\label{e;2016-04-24}
(\forall n\in\NN)\quad
\EC{\alpha_{n+1}}{\FF_n}+\vartheta_n\leq\chi_n\alpha_n
+\eta_n\quad\as
\end{equation}
Then the following hold:
\begin{enumerate}
\item  
\label{le:1i} 
Set $(\forall n\in\NN)$
$\overline{\vartheta}_n=\sum_{k=0}^n\big(\prod_{\ell=k+1}^n
\chi_\ell\big)\EC{\vartheta_k}{\FF_0}$ and
$\overline{\eta}_n=\sum_{k=0}^n\big(\prod_{\ell=k+1}^n
\chi_\ell\big)\EC{\eta_k}{\FF_0}$
(with the convention $\prod_{n+1}^n\cdot=1$). Then
\begin{equation}
\label{e:newequa}
(\forall n\in\NN)\quad
\EC{\alpha_{n+1}}{\FF_0}+
\overline{\vartheta}_n\leq\Bigg(\prod_{k=0}^n\chi_k\Bigg)
\alpha_0+\overline{\eta}_n\quad\as
\end{equation}
\item 
\label{le:1ii}  
Suppose that $\EE\alpha_0<\pinf$  and 
$\sum_{n\in\NN}\EE\eta_n<\pinf$.
Then $\sum_{n\in\NN}\EE\alpha_n<\pinf$ and
$\sum_{n\in\NN}\EE\vartheta_n<\pinf$.
\end{enumerate}
\end{lemma}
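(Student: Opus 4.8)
The plan is to prove part (i) by induction on $n$ and then derive part (ii) from it. For part (i), the base case $n=0$ reads $\EC{\alpha_1}{\FF_0}+\vartheta_0\leq\chi_0\alpha_0+\eta_0$, which is exactly \eqref{e;2016-04-24} for $n=0$ together with the conventions $\overline{\vartheta}_0=\EC{\vartheta_0}{\FF_0}=\vartheta_0$ and $\overline{\eta}_0=\eta_0$ (here $\vartheta_0$ and $\eta_0$ are $\FF_0$-measurable, so the conditional expectations are superfluous, and $\alpha_1$ is not assumed $\FF_1$-measurable here — only $\alpha_n\in\ell_+(\mathscr F)$ is used, and actually the inequality \eqref{e;2016-04-24} already involves $\EC{\alpha_{n+1}}{\FF_n}$). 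For the induction step, I would take \eqref{e;2016-04-24} at rank $n+1$, namely $\EC{\alpha_{n+2}}{\FF_{n+1}}+\vartheta_{n+1}\leq\chi_{n+1}\alpha_{n+1}+\eta_{n+1}$, apply $\EC{\cdot}{\FF_0}$ to both sides, use the tower property $\EC{\EC{\alpha_{n+2}}{\FF_{n+1}}}{\FF_0}=\EC{\alpha_{n+2}}{\FF_0}$ (valid since $\FF_0\subset\FF_{n+1}$), and bound $\EC{\chi_{n+1}\alpha_{n+1}}{\FF_0}=\chi_{n+1}\EC{\alpha_{n+1}}{\FF_0}$ using the induction hypothesis. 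This yields
\begin{equation}
\EC{\alpha_{n+2}}{\FF_0}+\EC{\vartheta_{n+1}}{\FF_0}
\leq\chi_{n+1}\Bigg(\Bigg(\prod_{k=0}^n\chi_k\Bigg)\alpha_0+\overline{\eta}_n-\overline{\vartheta}_n\Bigg)+\EC{\eta_{n+1}}{\FF_0}.
\end{equation}
Rearranging and matching terms against the definitions of $\overline{\vartheta}_{n+1}$ and $\overline{\eta}_{n+1}$ (the product $\prod_{\ell=k+1}^{n+1}\chi_\ell=\chi_{n+1}\prod_{\ell=k+1}^n\chi_\ell$ for $k\leq n$, and the $k=n+1$ term contributes $\EC{\vartheta_{n+1}}{\FF_0}$, resp. $\EC{\eta_{n+1}}{\FF_0}$) gives \eqref{e:newequa} at rank $n+1$. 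The bookkeeping with the telescoping products is the only place requiring care, but it is routine.

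For part (ii), the plan is to take expectations in \eqref{e;2016-04-24} and set $a_n=\EE\alpha_n$, $u_n=\EE\vartheta_n$, $e_n=\EE\eta_n$, giving $a_{n+1}+u_n\leq\chi_n a_n+e_n$. Since $\varlimsup\chi_n<1$, fix $\rho\in\zeroun$ and $N\in\NN$ with $\chi_n\leq\rho$ for all $n\geq N$; then for $n\geq N$ one has $a_{n+1}\leq\rho a_n+e_n$, and iterating yields $a_{n+1}\leq\rho^{n+1-N}a_N+\sum_{k=N}^{n}\rho^{n-k}e_k$. Summing over $n$ and using $\sum_k e_k<\pinf$ together with the summability of the geometric weights (a discrete convolution of an $\ell^1$ sequence with a summable geometric kernel is $\ell^1$) gives $\sum_n a_n<\pinf$; one must also check $a_N=\EE\alpha_N<\pinf$, which follows by finite iteration from $\EE\alpha_0<\pinf$ and $\sum e_n<\pinf$ since each $\chi_n$ is finite. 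Finally, $\sum_n u_n\leq\sum_n(\chi_n a_n+e_n-a_{n+1})\leq\sum_n(Ca_n+e_n)<\pinf$, where $C=\sup_n\chi_n<\pinf$ (the sup is finite because $\varlimsup\chi_n<1$ and the sequence is in $\RP$), completing the proof.

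I do not anticipate a genuine obstacle here; the statement is a Robbins–Siegmund–type supermartingale lemma with a contraction factor, and the proof is a direct induction plus an elementary summability argument. The only mild subtlety is justifying all the interchanges of conditional expectation (tower property, pulling out $\FF_0$-measurable or deterministic factors) and ensuring finiteness of the various expectations before manipulating them — in particular that $\EE\alpha_n<\pinf$ for every $n$, which part of the hypothesis of (ii) is designed to guarantee. If one wished to avoid even that, one could phrase the expectation-level recursion in $\RPX$ and note all inequalities still hold, but keeping track of finiteness as above is cleaner.
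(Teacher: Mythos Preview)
Your proof is correct and follows essentially the same approach as the paper. For \ref{le:1i} both arguments are an induction via the tower property; for \ref{le:1ii} the paper takes expectations in \eqref{e:newequa} and then uses $\EE\vartheta_n\leq\EE\overline{\vartheta}_n$, whereas you take expectations directly in \eqref{e;2016-04-24} and bound $u_n\leq C a_n+e_n$ --- a cosmetic variation, the geometric-decay-plus-$\ell^1$-convolution core being identical.
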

\begin{proof}
\ref{le:1i}:
Let $n\in\NN\smallsetminus\{0\}$. We deduce from 
\eqref{e;2016-04-24} that
\begin{align}
\EC{\EC{\alpha_{n+1}}{\FF_n}}{\FF_{n-1}}+\EC{\vartheta_n}{\FF_{n-1}}
&\leq\EC{\chi_n\alpha_n}{\FF_{n-1}}+\EC{\eta_n}{\FF_{n-1}}
\nonumber\\
&=\chi_n\EC{\alpha_n}{\FF_{n-1}}+\EC{\eta_n}{\FF_{n-1}}\quad\as
\label{e:2016-04-24bis}
\end{align}
However, since $\FF_{n-1}\subset\FF_n$, we have 
$\EC{\EC{\alpha_{n+1}}{\FF_n}}{\FF_{n-1}}=
\EC{\alpha_{n+1}}{\FF_{n-1}}$. Therefore \eqref{e:2016-04-24bis}
yields
\begin{equation}
\EC{\alpha_{n+1}}{\FF_{n-1}}\leq\chi_n\EC{\alpha_n}{\FF_{n-1}}
+\EC{\eta_n}{\FF_{n-1}}-\EC{\vartheta_n}{\FF_{n-1}}\quad\as
\end{equation}
By proceeding by induction and observing that $\alpha_0$ is
$\FF_0$-measurable, we obtain \eqref{e:newequa}.

\ref{le:1ii}:
We derive from \eqref{e:newequa} that
\begin{equation}
\label{e:newequabis}
(\forall n\in\NN)\quad
\EE\alpha_{n+1}+\EE\overline{\vartheta}_n\leq
\Bigg(\prod_{k=0}^n\chi_k\Bigg)\EE\alpha_0+\EE\overline{\eta}_n=
\Bigg(\prod_{k=0}^n\chi_k\Bigg)
\EE\alpha_0+\sum_{k=0}^n\Bigg(\prod_{\ell=k+1}^n
\chi_\ell\Bigg)\EE\eta_k.
\end{equation}
On the other hand, there exist 
$q\in\NN$ and ${\rho}\in\zeroun$ such that, for every integer 
$n>q$, $\chi_n<\rho$ and, therefore, 
\begin{align}
\label{e:newequabis2}
\EE\alpha_{n+1}+\EE\overline{\vartheta}_n&\leq
\Bigg(\prod_{k=0}^q\chi_k\Bigg){\rho}^{n-q}
\EE\alpha_0+\sum_{k=0}^q\Bigg(\prod_{\ell=k+1}^q
\chi_\ell\Bigg){\rho}^{n-q}\EE\eta_k
+\sum_{k=q+1}^n {\rho}^{n-k}\EE\eta_k\nonumber\\
&\leq\Bigg(\prod_{k=0}^q\chi_k\Bigg){\rho}^{n-q}\EE\alpha_0+
\max\left\{\left(\frac{\prod_{\ell=k+1}^q\chi_\ell}
{{\rho}^{q-k}}\right)_{0\leq k\leq q},1\right\}
\sum_{k=0}^n{\rho}^{n-k}\EE\eta_k.
\end{align}
Since $\sum_{n\in\NN}{\rho}^n<\pinf$ and
$\sum_{n\in\NN}\EE\eta_n<\pinf$, it follows from standard
properties of the discrete convolution that
$(\sum_{k=0}^n{\rho}^{n-k}\EE\eta_k)_{n\in\NN}$ is
summable. We then deduce from \eqref{e:newequabis2} that
$\sum_{n\in\NN}\EE\alpha_n<\pinf$ and
$\sum_{n\in\NN}\EE\overline{\vartheta}_n<\pinf$.
Thus, the inequalities
\begin{equation}
(\forall n\in\NN)\quad
\EE\vartheta_n\leq\sum_{k=0}^n\Bigg(\prod_{\ell=k+1}^n
\chi_\ell\Bigg)\EE\vartheta_k=\EE\overline{\vartheta}_n
\end{equation}
yield $\sum_{n\in\NN}\EE\vartheta_n<\pinf$.
\end{proof}

\begin{lemma}
\label{l:1}
Let $\phi\colon\RP\to\RP$ be a strictly increasing 
function such that $\lim_{t\to\pinf}\phi(t)=\pinf$, let 
$(x_n)_{n\in\NN}$ be a sequence of $\HH$-valued random variables,
and let $(\FF_n)_{n\in\NN}$ be a sequence
of sub-sigma-algebras of $\FF$ such that
\begin{equation}
\label{e:2013-11-14'}
(\forall n\in\NN)\quad\sigma(x_0,\ldots,x_n)\subset
\FF_n\subset\FF_{n+1}.
\end{equation}
Suppose that there exist $\mathsf{z}\in\HH$,
$(\vartheta_n)_{n\in\NN}\in\ell_+({\mathscr{F}})$,
$(\eta_n)_{n\in\NN}\in\ell_+({\mathscr{F}})$,
and a sequence $(\chi_n)_{n\in\NN}$ in $\RP$ such that
$\varlimsup\chi_n<1$ and
\begin{equation}
\label{e:sqf1}
(\forall n\in\NN)\quad
\EC{\phi(\|x_{n+1}-\mathsf{z}\|)}{\FF_n}+\vartheta_n
\leq\chi_n\phi(\|x_n-\mathsf{z}\|)+
\eta_n\quad\as
\end{equation}
Set $(\forall n\in\NN)$
$\overline{\vartheta}_n=\sum_{k=0}^n\big(\prod_{\ell=k+1}^n
\chi_\ell\big)\EC{\vartheta_k}{\FF_0}$ and
$\overline{\eta}_n=\sum_{k=0}^n\big(\prod_{\ell=k+1}^n
\chi_\ell\big)\EC{\eta_k}{\FF_0}$.
Then the following hold:
\begin{enumerate}
\item 
\label{l:1i} 
$(\forall n\in\NN)$
$\displaystyle\EC{\phi(\|x_{n+1}-\mathsf{z}\|)}
{\FF_0}+\overline{\vartheta}_n
\leq\Bigg(\prod_{k=0}^n\chi_k\Bigg)\phi(\|x_0-\mathsf{z}\|)+
\overline{\eta}_n\quad\as$
\item 
\label{l:1ii}  
Let $p\in\RPP$ and set $\phi=|\cdot|^p$. Suppose that 
$x_0\in L^p(\Omega,\FF,\PP;\HH)$ and that 
$\sum_{n\in\NN}\EE\eta_n<\pinf$. Then the following hold:
\begin{enumerate}
\item 
\label{l:1iia} 
$\EE\|x_n-\mathsf{z}\|^p\to 0$
and $\sum_{n\in\NN}\EE\vartheta_n<\pinf$.
\item 
\label{l:1iib} 
Suppose that $(\eta_n)_{n\in\NN}\in\ell_+^1(\mathscr{F})$. Then
$(x_n)_{n\in\NN}$ converges strongly $\as$ to $\mathsf{z}$.
\end{enumerate}
\end{enumerate}
\end{lemma}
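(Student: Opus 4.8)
The plan is to recognize Lemma~\ref{l:1} as a specialization of Lemma~\ref{l:2016} to the scalar random variables $\alpha_n=\phi(\|x_n-\mathsf{z}\|)$, supplemented by two elementary measure-theoretic arguments for the statements in~\ref{l:1ii}.

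For~\ref{l:1i}, I would set $\alpha_n=\phi(\|x_n-\mathsf{z}\|)$ for every $n\in\NN$. Since $\phi$ is strictly increasing on $\RP$ it is Borel measurable, and since $\sigma(x_0,\ldots,x_n)\subset\FF_n$ each $\alpha_n$ is an $\FF_n$-measurable, $\RP$-valued random variable; thus $(\alpha_n)_{n\in\NN}\in\ell_+(\mathscr{F})$. Together with the standing hypotheses that $(\vartheta_n)_{n\in\NN}$ and $(\eta_n)_{n\in\NN}$ belong to $\ell_+(\mathscr{F})$, that $(\chi_n)_{n\in\NN}$ is a sequence in $\RP$, and that $\varlimsup\chi_n<1$, the inequality~\eqref{e:sqf1} is exactly~\eqref{e;2016-04-24} for this $(\alpha_n)_{n\in\NN}$, and the sequences $\overline{\vartheta}_n,\overline{\eta}_n$ introduced in the statement coincide with those of Lemma~\ref{l:2016}\ref{le:1i}. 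Hence Lemma~\ref{l:2016}\ref{le:1i} applies, and \eqref{e:newequa} read with $\alpha_0=\phi(\|x_0-\mathsf{z}\|)$ is precisely the asserted inequality in~\ref{l:1i}.

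For the claim in~\ref{l:1iia}, now $\phi=|\cdot|^p$ and $\alpha_n=\|x_n-\mathsf{z}\|^p$. As $\mathsf{z}$ is deterministic and $x_0\in L^p(\Omega,\FF,\PP;\HH)$, we have $x_0-\mathsf{z}\in L^p(\Omega,\FF,\PP;\HH)$, i.e.\ $\EE\alpha_0<\pinf$; with the hypothesis $\sum_{n\in\NN}\EE\eta_n<\pinf$ this puts us in the scope of Lemma~\ref{l:2016}\ref{le:1ii}, which yields both $\sum_{n\in\NN}\EE\alpha_n<\pinf$ and $\sum_{n\in\NN}\EE\vartheta_n<\pinf$. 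The first summability forces $\EE\alpha_n=\EE\|x_n-\mathsf{z}\|^p\to 0$, and the second is the remaining assertion of~\ref{l:1iia}.

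For~\ref{l:1iib}, the quickest route uses the fact $\sum_{n\in\NN}\EE\alpha_n<\pinf$ already obtained: by the monotone convergence theorem $\EE\sum_{n\in\NN}\alpha_n=\sum_{n\in\NN}\EE\alpha_n<\pinf$, so $\sum_{n\in\NN}\alpha_n<\pinf$ $\PP$-a.s., whence $\alpha_n=\|x_n-\mathsf{z}\|^p\to 0$ $\PP$-a.s., which is the claimed strong almost sure convergence of $(x_n)_{n\in\NN}$ to $\mathsf{z}$. Alternatively, in the spirit of \cite{Siop15}, one may exploit the hypothesis $(\eta_n)_{n\in\NN}\in\ell_+^1(\mathscr{F})$ together with $\sum_{n}\vartheta_n<\pinf$ a.s.\ (from~\ref{l:1iia}): after discarding the finitely many indices with $\chi_n\geq 1$ (legitimate because $\varlimsup\chi_n<1$), \eqref{e:sqf1} gives $\EC{\alpha_{n+1}}{\FF_n}\leq\alpha_n-\vartheta_n+\eta_n$ a.s., so that $U_n=\alpha_n+\sum_{k<n}(\vartheta_k-\eta_k)$ is, from some index on, a supermartingale bounded below by the integrable random variable $-\sum_k\eta_k$; it therefore converges a.s., hence so does $(\alpha_n)_{n\in\NN}$, and Fatou's lemma together with~\ref{l:1iia} identifies the limit as $0$. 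The only steps needing any care are the measurability of $\alpha_n$ (settled by monotonicity of $\phi$) and, along this alternative route, the bookkeeping that absorbs the finitely many large $\chi_n$ and checks integrability at the starting index (guaranteed by $\sum_n\EE\alpha_n<\pinf$); there is no genuine obstacle beyond these.
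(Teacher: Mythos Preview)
Your proof is correct and, for parts~\ref{l:1i} and~\ref{l:1iia}, essentially identical to the paper's: both set $\alpha_n=\phi(\|x_n-\mathsf{z}\|)$ and invoke Lemma~\ref{l:2016}\ref{le:1i}--\ref{le:1ii} verbatim.

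For part~\ref{l:1iib} your primary route differs from the paper's and is in fact more elementary. The paper argues in two stages: first it cites an external Robbins--Siegmund-type result \cite[Proposition~3.1(iii)]{Pafa16}, which uses the hypothesis $(\eta_n)_{n\in\NN}\in\ell_+^1(\mathscr{F})$ to conclude that $(\|x_n-\mathsf{z}\|)_{n\in\NN}$ converges almost surely; second, it extracts from the $L^p$ convergence in~\ref{l:1iia} an a.s.\ convergent subsequence to identify the limit as~$0$. Your ``quickest route'' bypasses both steps: since $\sum_{n}\EE\alpha_n<\pinf$ was already obtained in~\ref{l:1iia}, Tonelli gives $\sum_n\alpha_n<\pinf$ a.s.\ directly, hence $\alpha_n\to 0$ a.s. This is self-contained, avoids the external reference, and --- as you implicitly observe --- shows that the extra hypothesis $(\eta_n)_{n\in\NN}\in\ell_+^1(\mathscr{F})$ is not actually needed for the conclusion of~\ref{l:1iib} once the assumptions of~\ref{l:1iia} are in force. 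Your alternative supermartingale route is closer in spirit to the paper's use of \cite{Pafa16}, and your bookkeeping (integrability at the starting index, a.s.\ summability of $(\vartheta_n)$ via $\sum_n\EE\vartheta_n<\pinf$) is sound.
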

\begin{proof}
We apply Lemma~\ref{l:2016}\ref{le:1i} with
$(\forall n\in\NN)$ $\alpha_n=\phi(\|x_{n}-\mathsf{z}\|)$.

\ref{l:1i}: See Lemma~\ref{l:2016}\ref{le:1i}.

\ref{l:1iia}: 
Since $L^p(\Omega,\FF,\PP;\HH)$ is a vector space 
\cite[Th\'eor\`eme~5.8.8 and Proposition~5.8.9]{Sch93b} that
contains $x_0$ and $\mathsf{z}$, it also contains 
$x_0-\mathsf{z}$. Hence
$\EE\alpha_0=\EE\|x_0-\mathsf{z}\|^p<\pinf$,
and it follows from Lemma~\ref{l:2016}\ref{le:1ii} that
$\sum_{n\in\NN}\EE\|x_n-\mathsf{z}\|^p<\pinf$ and
$\sum_{n\in\NN}\EE\vartheta_n<\pinf$. Consequently,
\begin{equation}
\label{e:2016-05-01}
\EE\|x_n-\mathsf{z}\|^p\to 0.
\end{equation}

\ref{l:1iib}: In view of \eqref{e:sqf1},
since $(\eta_n)_{n\in\NN}\in\ell_+^1({\mathscr{F}})$, it follows
from \cite[Proposition~3.1(iii)]{Pafa16} that 
$(\|x_n-\mathsf{z}\|)_{n\in\NN}$ converges \as\,  
However, we derive from \eqref{e:2016-05-01} that there exists a 
strictly increasing sequence $(k_n)_{n\in\NN}$ in $\NN$ such that
$\|x_{k_n}-\mathsf{z}\|\to 0$ $\as$ 
\cite[Corollaire~5.8.11]{Sch93b}. 
Altogether $\|x_{n}-\mathsf{z}\|\to 0$ $\as$ 
\end{proof}

\begin{theorem} 
\label{t:1}
Let  $(\lambda_n)_{n\in\NN}$ be a sequence in $\rzeroun$ such that 
$\inf_{n\in\NN}\lambda_n>0$, and let $(t_{n})_{n\in\NN}$, 
$(x_{n})_{n\in\NN}$, and $(e_{n})_{n\in\NN}$ be sequences of 
$\HH$-valued random variables. Further, let 
$(\FF_n)_{n\in\NN}$ be a sequence of sub-sigma-algebras of 
$\FF$ such that
\begin{equation}
(\forall n\in\NN)\quad\sigma(x_0,\ldots,x_n)\subset
\FF_n\subset\FF_{n+1}.
\end{equation}
Suppose that the following are satisfied:
\begin{enumerate}[label=\rm{[\alph*]}]
\item
\label{t:1a}
$(\forall n\in\NN)$ $x_{n+1}=x_n+\lambda_n(t_n+e_n-x_n)$.
\item
\label{t:1b}
There exists a sequence $(\xi_n)_{n\in\NN}$ in $\RP$ such that
\begin{equation}
\label{e:sumlaepsilon}
\sum_{n\in\NN}\sqrt{\xi_n}<\pinf
\end{equation}
and $(\forall n\in\NN)$ $\EC{\|e_n\|^2}{\FF_n}\leq\xi_n$.
\item
\label{t:1c}
There exist $\mathsf{z}\in\mathsf{H}$,
$(\theta_n)_{n\in\NN}\in\ell_+({\mathscr{F}})$, 
$(\nu_n)_{n\in\NN}\in\ell_+({\mathscr{F}})$, and
a sequence $(\mu_n)_{n\in\NN}$ in $\RP$ such that 
$\varlimsup\mu_n<1$ and 
\begin{equation}
\label{e:tnzmunu}
(\forall n\in\NN)\quad
\EC{\|t_n-\mathsf{z}\|^2}{\FF_n}+\theta_n\leq\mu_n
\|x_n-\mathsf{z}\|^2+\nu_n\quad\as
\end{equation}
\end{enumerate}
Set
\begin{equation}
(\forall n\in\NN)\quad 
\begin{cases}
\chi_n=1-\lambda_n+\lambda_n\mu_n+\sqrt{\xi_n}\lambda_n\big(1-\lambda_n+\lambda_n
\sqrt{\mu_n}\big)\\
\displaystyle\overline{\vartheta}_n=\sum_{k=0}^n
\Bigg[\prod_{\ell=k+1}^n\chi_\ell\Bigg]
\lambda_k\big(\EC{\theta_k}{\FF_0}+(1-\lambda_k)
\EC{\|t_k-x_k\|^2}{\FF_0}\big)\\[5mm]
\displaystyle\overline{\eta}_n=
\sum_{k=0}^n\Bigg[\prod_{\ell=k+1}^n\chi_\ell\Bigg] 
\lambda_k\Big(\EC{\nu_k}{\FF_0}+\big(1-\lambda_k+\lambda_k\big(
2\EC{\sqrt{\nu_k}}{\FF_0}+\sqrt{\mu_k}\big)\big)\sqrt{\xi_k}
+\lambda_k\xi_k\Big).
\end{cases}
\end{equation}
Then the following hold:
\begin{enumerate}
\item 
\label{t:1i} 
$(\forall n\in\NN)$
$\displaystyle\EC{\|x_{n+1}-\mathsf{z}\|^2}{\FF_0}+
\overline{\vartheta}_n\leq\Bigg(\prod_{k=0}^n\chi_k\Bigg)
\|x_0-\mathsf{z}\|^2+\overline{\eta}_n\quad\as$
\item 
\label{t:1ii}  
Suppose that $x_0\in L^2(\Omega,\FF,\PP;\HH)$ and that
\begin{equation}
\label{e:sumsqrtllanu}
\sum_{n\in\NN}\sqrt{\EE\nu_n}<\pinf. 
\end{equation}
Then the following hold:
\begin{enumerate}
\item 
\label{t:1iia} 
$\EE\|x_n-\mathsf{z}\|^2\to 0$.
\item 
\label{t:1iia+} 
$\sum_{n\in\NN}\EE\theta_n<\pinf$. 
\item
\label{t:1iia++} 
$\sum_{n\in\NN}(1-\lambda_n)\EE\|t_n-x_n\|^2<\pinf$.
\item 
\label{t:1iib} 
Suppose that $(\nu_n)_{n\in\NN}\in\ell_+^{1/2}({\mathscr{F}})$. Then
$(x_n)_{n\in\NN}$ converges strongly $\as$ to $\mathsf{z}$.
\end{enumerate}
\end{enumerate}
\end{theorem}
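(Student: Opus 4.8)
The plan is to cast hypotheses \ref{t:1a}--\ref{t:1c} into the exact form required by Lemma~\ref{l:1}, applied with $\phi=|\cdot|^2$ (hence $p=2$), the same point $\mathsf z$, and the same filtration $(\FF_n)_{n\in\NN}$. The core of the argument is therefore to exhibit $(\vartheta_n)_{n\in\NN}\in\ell_+(\mathscr F)$ and $(\eta_n)_{n\in\NN}\in\ell_+(\mathscr F)$ for which the stochastic quasi-Fej\'er recursion \eqref{e:sqf1} holds with exactly the $\chi_n$ of the statement, i.e., $\EC{\|x_{n+1}-\mathsf z\|^2}{\FF_n}+\vartheta_n\le\chi_n\|x_n-\mathsf z\|^2+\eta_n$ almost surely for every $n\in\NN$, and such that conditioning on $\FF_0$ and applying the tower rule turns the partial sums appearing in Lemma~\ref{l:1}\ref{l:1i} into precisely the $\overline{\vartheta}_n$ and $\overline{\eta}_n$ displayed in the theorem. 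Once this is in place, \ref{t:1i} is nothing but Lemma~\ref{l:1}\ref{l:1i}, while \ref{t:1ii} follows from Lemma~\ref{l:1}\ref{l:1iia}--\ref{l:1iib}.

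To produce the recursion, rewrite \ref{t:1a} as $x_{n+1}-\mathsf z=(1-\lambda_n)(x_n-\mathsf z)+\lambda_n(t_n-\mathsf z)+\lambda_n e_n$ and expand the squared norm by means of the identity $\|(1-\lambda_n)u+\lambda_n v\|^2=(1-\lambda_n)\|u\|^2+\lambda_n\|v\|^2-\lambda_n(1-\lambda_n)\|u-v\|^2$ with $u=x_n-\mathsf z$ and $v=t_n-\mathsf z$, so that $\|u-v\|=\|t_n-x_n\|$. Taking $\EC{\cdot}{\FF_n}$ and using that $x_n$ is $\FF_n$-measurable while $\lambda_n$ is deterministic, the term $\lambda_n\EC{\|t_n-\mathsf z\|^2}{\FF_n}$ is handled by \ref{t:1c} (which also supplies the subtracted summand $\lambda_n\theta_n$) and $\lambda_n^2\EC{\|e_n\|^2}{\FF_n}\le\lambda_n^2\xi_n$ by \ref{t:1b}. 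The delicate piece is the cross term $2\lambda_n\scal{(1-\lambda_n)(x_n-\mathsf z)+\lambda_n(t_n-\mathsf z)}{e_n}$, which I split as $2\lambda_n(1-\lambda_n)\scal{x_n-\mathsf z}{e_n}+2\lambda_n^2\scal{t_n-\mathsf z}{e_n}$. In the first summand $x_n-\mathsf z$ is $\FF_n$-measurable, so its conditional expectation equals $2\lambda_n(1-\lambda_n)\scal{x_n-\mathsf z}{\EC{e_n}{\FF_n}}$, which by Cauchy--Schwarz, the conditional Jensen inequality, \ref{t:1b}, and $2a\le a^2+1$ is bounded by $\lambda_n(1-\lambda_n)\sqrt{\xi_n}\,(\|x_n-\mathsf z\|^2+1)$. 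For the second summand, the conditional Cauchy--Schwarz inequality together with \ref{t:1c}, \ref{t:1b}, $\sqrt{a+b}\le\sqrt a+\sqrt b$, and $2a\le a^2+1$ gives the bound $\lambda_n^2\sqrt{\xi_n}\sqrt{\mu_n}\,(\|x_n-\mathsf z\|^2+1)+2\lambda_n^2\sqrt{\xi_n}\sqrt{\nu_n}$. Adding up, the coefficient of $\|x_n-\mathsf z\|^2$ is exactly $1-\lambda_n+\lambda_n\mu_n+\sqrt{\xi_n}\lambda_n(1-\lambda_n+\lambda_n\sqrt{\mu_n})=\chi_n$, and the recursion holds with $\vartheta_n=\lambda_n\theta_n+\lambda_n(1-\lambda_n)\EC{\|t_n-x_n\|^2}{\FF_n}$ and $\eta_n=\lambda_n\nu_n+\lambda_n^2\xi_n+\lambda_n(1-\lambda_n)\sqrt{\xi_n}+\lambda_n^2\sqrt{\xi_n}\sqrt{\mu_n}+2\lambda_n^2\sqrt{\xi_n}\sqrt{\nu_n}$; conditioning on $\FF_0$ and invoking the tower rule then recovers the stated $\overline{\vartheta}_n$ and $\overline{\eta}_n$.

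It remains to verify the hypotheses of Lemma~\ref{l:1}. Both $(\vartheta_n)_{n\in\NN}$ and $(\eta_n)_{n\in\NN}$ belong to $\ell_+(\mathscr F)$, being nonnegative combinations of $\FF_n$-measurable quantities, with $\EC{\|t_n-x_n\|^2}{\FF_n}<\pinf$ almost surely by \ref{t:1c}. To see that $\varlimsup\chi_n<1$, write $1-\lambda_n+\lambda_n\mu_n=1-\lambda_n(1-\mu_n)$, so that $\varlimsup(1-\lambda_n+\lambda_n\mu_n)=1-\varliminf\lambda_n(1-\mu_n)\le1-(\inf_n\lambda_n)\varliminf(1-\mu_n)<1$ because $\inf_n\lambda_n>0$ and $\varlimsup\mu_n<1$, while $\sqrt{\xi_n}\lambda_n(1-\lambda_n+\lambda_n\sqrt{\mu_n})\to0$ since $\sqrt{\xi_n}\to0$ by \eqref{e:sumlaepsilon} and the remaining factor is eventually bounded (as $\varlimsup\mu_n<1$). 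Hence Lemma~\ref{l:1}\ref{l:1i} yields \ref{t:1i}. For \ref{t:1ii}, conditions \eqref{e:sumsqrtllanu} and \eqref{e:sumlaepsilon} force $\EE\nu_n\to0$ and $\xi_n\to0$, whence $\sum_n\EE\nu_n<\pinf$ and $\sum_n\xi_n<\pinf$; combined with $\EE\sqrt{\nu_n}\le\sqrt{\EE\nu_n}$ and $2\sqrt{\xi_n}\sqrt{\EE\nu_n}\le\xi_n+\EE\nu_n$, this gives $\sum_n\EE\eta_n<\pinf$, so Lemma~\ref{l:1}\ref{l:1iia} delivers \ref{t:1iia} together with $\sum_n\EE\vartheta_n<\pinf$; taking expectations in $\vartheta_n$ and using $\inf_n\lambda_n>0$ then gives \ref{t:1iia+} and \ref{t:1iia++}. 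Finally, if $(\nu_n)_{n\in\NN}\in\ell_+^{1/2}(\mathscr F)$ then $\nu_n\to0$ almost surely, hence $\sum_n\nu_n<\pinf$ almost surely, and with $2\sqrt{\xi_n}\sqrt{\nu_n}\le\xi_n+\nu_n$ one obtains $(\eta_n)_{n\in\NN}\in\ell_+^1(\mathscr F)$, so \ref{t:1iib} follows from Lemma~\ref{l:1}\ref{l:1iib}. The main obstacle is the cross-term bookkeeping: one must split off the $\FF_n$-measurable part of the inner product so as to rely only on the weak control $\EC{\|e_n\|^2}{\FF_n}\le\xi_n$ (no zero-conditional-mean assumption on $e_n$), and then track the $\sqrt{\mu_n}$- and $\sqrt{\nu_n}$-weighted remainders precisely enough that, after conditioning on $\FF_0$, they recombine into exactly the $\chi_n$ and $\overline{\eta}_n$ of the statement.
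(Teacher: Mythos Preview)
Your proposal is correct and follows essentially the same route as the paper: expand $\|x_{n+1}-\mathsf z\|^2$ via the convex-combination identity, bound the cross term by (conditional) Cauchy--Schwarz together with \ref{t:1b}--\ref{t:1c} and the elementary inequality $2a\le a^2+1$, read off $\chi_n$, $\vartheta_n$, and $\eta_n$, and then invoke Lemma~\ref{l:1}. The only cosmetic difference is that you pull out the $\FF_n$-measurable factor $x_n-\mathsf z$ from the inner product before applying Cauchy--Schwarz, whereas the paper applies Cauchy--Schwarz and the triangle inequality first; both paths land on the same $\chi_n$ and the same $\eta_n$ as in \eqref{e:2016-15-7a}.
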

\begin{proof}
\ref{t:1i}: 
Set $\lambda=\inf_{n\in\NN}\lambda_n$. Then
\begin{equation}
(\forall n\in\NN)\quad\chi_n\leq 1-(1-\mu_n){\lambda}
+\sqrt{\xi_n}\big(1+\sqrt{\mu_n}\big).
\end{equation}
Since $\varlimsup\mu_n<1$ and $\lim\xi_n=0$, we
have $\varlimsup\chi_n<1$. In addition, we derive 
from~\ref{t:1a}, \cite[Corollary~2.15]{Livre1}, and 
\eqref{e:tnzmunu} that
\begin{align}
\label{e:dusoir3007O}
(\forall n\in\NN)\quad&\|x_{n+1}-\mathsf{z}\|^2\nonumber\\
&=\|(1-\lambda_n) (x_n-\mathsf{z})+\lambda_n
(t_n-\mathsf{z})\|^2\nonumber\\
&\quad+2\lambda_n\scal{(1-\lambda_n)
(x_n-\mathsf{z})+\lambda_n\big(t_n-\mathsf{z}\big)}{e_n} 
+\lambda_n^2\|e_n\|^2\nonumber\\
&=(1-\lambda_n)\|x_n-\mathsf{z}\|^2
+\lambda_n\|t_n-\mathsf{z}\|^2-\lambda_n(1-\lambda_n)
\|t_n-x_n\|^2\nonumber\\
&\quad\;+2\lambda_n\scal{(1-\lambda_n)
(x_n-\mathsf{z})+\lambda_n\big(t_n-\mathsf{z}\big)}{e_n} 
+\lambda_n^2\|e_n\|^2\quad\as
\end{align}
Hence, \ref{t:1c} implies that
\begin{align}
\label{e:2013-11-16y}
&\hskip -4mm
(\forall n\in\NN)\quad
\EC{\|x_{n+1}-\mathsf{z}\|^2}{\FF_n}\nonumber\\
&\leq(1-\lambda_n)\|x_n-\mathsf{z}\|^2+
\lambda_n\EC{\|t_n-\mathsf{z}\|^2}{\FF_n}
-\lambda_n(1-\lambda_n)\EC{\|t_n-x_n\|^2}{\FF_n}\nonumber\\
&\quad\;+2\lambda_n\big((1-\lambda_n)\|x_n-\mathsf{z}\| 
+\lambda_n
\sqrt{\EC{\|t_n-\mathsf{z}\|^2}{\FF_n}}\big)\sqrt{\EC{\|e_n\|^2}{\FF_n}}
+\lambda_n^2\EC{\|e_n\|^2}{\FF_n}\nonumber\\
&\leq(1-\lambda_n)\|x_n-\mathsf{z}\|^2+\lambda_n\big(
\mu_n\|x_n-\mathsf{z}\|^2+\nu_n
-\theta_n\big)-\lambda_n(1-\lambda_n)\EC{\|t_n-x_n\|^2}{\FF_n}
\nonumber\\
&\quad\;+2\lambda_n\big((1-\lambda_n)\|x_n-\mathsf{z}\| 
+\lambda_n\sqrt{\mu_n
\|x_n-\mathsf{z}\|^2+\nu_n}\big)\sqrt{\EC{\|e_n\|^2}{\FF_n}}
+\lambda_n^2\EC{\|e_n\|^2}{\FF_n}\quad\as
\end{align}
Now set 
\begin{equation}
\label{e:2016-15-7a}
(\forall n\in\NN)\quad 
\begin{cases}
\vartheta_n=\lambda_n\theta_n+
\lambda_n(1-\lambda_n)\EC{\|t_n-x_n\|^2}{\FF_n}\\
\kappa_n=\lambda_n\nu_n+2\lambda_n^2\sqrt{\nu_n}
\sqrt{\EC{\|e_n\|^2}{\FF_n}}+\lambda_n^2\EC{\|e_n\|^2}{\FF_n}\\
\eta_n=\lambda_n\nu_n+\lambda_n
\big(1-\lambda_n+\lambda_n(2\sqrt{\nu_n}+\sqrt{\mu_n})\big)
\sqrt{\xi_n}+\lambda_n^2\xi_n.
\end{cases}
\end{equation}
It follows from \ref{t:1b} that
\begin{align}
\label{e:2013-11-16z}
(\forall n\in\NN)\quad
&\EC{\|x_{n+1}-\mathsf{z}\|^2}{\FF_n}\nonumber\\
&\leq(1-\lambda_n+\lambda_n\mu_n)\|x_n-\mathsf{z}\|^2
+2\lambda_n (1-\lambda_n+\lambda_n\sqrt{\mu_n})\|x_n-\mathsf{z}\| 
\sqrt{\EC{\|e_n\|^2}{\FF_n}}\nonumber\\
&\quad\;-\vartheta_n+\kappa_n\nonumber\\
&\leq(1-\lambda_n+\lambda_n\mu_n)\|x_n-\mathsf{z}\|^2
+\lambda_n (1-\lambda_n+\lambda_n\sqrt{\mu_n})(\|x_n-\mathsf{z}\|^2+1) 
\sqrt{\EC{\|e_n\|^2}{\FF_n}}\nonumber\\
&\quad\;-\vartheta_n+\kappa_n\nonumber\\
&\leq\chi_n\|x_n-\mathsf{z}\|^2 -\vartheta_n+\eta_n\quad\as
\end{align}
The result then follows by applying Lemma~\ref{l:1}\ref{l:1i} with
$\phi=|\cdot|^2$.

\ref{t:1iia}: 
According to \eqref{e:2016-15-7a}, for every $n\in\NN$,
\begin{align}
\label{e:hackett}
\EE\eta_n=&\;\lambda_n\EE\nu_n+\lambda_n
\big(1-\lambda_n+\lambda_n(2\EE\sqrt{\nu_n}+\sqrt{\mu_n})\big)
\sqrt{\xi_n}+\lambda_n^2\xi_n\nonumber\\
=&\;\lambda_n\EE\nu_n+(1-\lambda_n+\sqrt{\mu_n})\lambda_n\sqrt{\xi_n}
+2\lambda_n^2\sqrt{\xi_n}\,\EE\sqrt{\nu_n}
+\big(\lambda_n\sqrt{\xi_n}\big)^2\nonumber\\
\leq &\;\EE\nu_n+(1+\sqrt{\mu_n})\sqrt{\xi_n}
+2\bigg(\sup_{k\in\NN}\sqrt{\xi_k}\bigg)\sqrt{\EE\nu_n}
+\big(\sqrt{\xi_n}\big)^2,
\end{align}
where we have used the fact that $\lambda_n\in\rzeroun$ and Jensen's
inequality. We deduce from \eqref{e:hackett},
\eqref{e:sumlaepsilon}, and
\eqref{e:sumsqrtllanu} that $\sum_{n\in\NN}\EE\eta_n<\pinf$. 
Hence it follows from \eqref{e:2013-11-16z} and
Lemma~\ref{l:1}\ref{l:1iia} that
$\EE\|x_n-\mathsf{z}\|^2\to 0$ and that
$\sum_{n\in\NN}\EE\vartheta_n<\pinf$. 
In view of \eqref{e:2016-15-7a}, we obtain
\ref{t:1iia+} and \ref{t:1iia++}.

\ref{t:1iib}: In view of \eqref{e:2016-15-7a}, if 
$(\nu_n)_{n\in\NN}\in\ell_+^{1/2}({\mathscr{F}})$, then
$(\eta_n)_{n\in\NN}\in\ell_+^1({\mathscr{F}})$ and the strong 
convergence claim follows from Lemma~\ref{l:1}\ref{l:1iib}.
\end{proof}

\begin{remark}\ 
\begin{enumerate}
\item 
Under the assumptions of Theorem~\ref{t:1}, 
if $\nu_n\equiv 0$ and $\xi_n\equiv 0$, then
$\overline{\eta}_n\equiv 0$ and it follows from 
\ref{t:1i} that
$(\EC{\|x_{n}-\mathsf{z}\|^2}{\FF_0})_{n\in\NN}$ 
converges linearly to 0.
\item
The weak and strong almost sure convergences of a 
sequence $(x_n)_{n\in\NN}$ governed by \ref{t:1a} 
and \eqref{e:tnzmunu} were established in
\cite[Theorem~2.5]{Siop15} under different assumptions on
$(\mu_n)_{n\in\NN}$, $(\nu_n)_{n\in\NN}$, and $(e_n)_{n\in\NN}$.
\end{enumerate}
\end{remark}

\section{Mean-square and linear convergence of
Algorithm~\ref{algo:1}}
\label{sec:3}

We complement the almost sure weak and strong convergence results 
of \cite{Siop15} on the convergence of the orbits of
Algorithm~\ref{algo:1} by establishing mean-square and linear
convergence properties.
 
\subsection{Main results}
\label{se:mainres}

The next theorem constitutes our main result in terms of 
mean-square convergence. For added flexibility, this convergence 
will be evaluated in a norm $|||\cdot|||$ on $\HHH$ parameterized
by weights $(\omega_i)_{1\leq i\leq m}\in\RPP^m$ and 
defined by
\begin{equation}
\label{e:2014-02-14}
(\forall\boldsymbol{\mathsf{x}}\in\HHH)\quad
|||\boldsymbol{\mathsf{x}}|||^2=
\sum_{i=1}^m\omega_i\|\mathsf{x}_i\|^2.
\end{equation}

\begin{theorem}
\label{t:3}
Consider the setting of Problem~\ref{prob:1} and 
Algorithm~\ref{algo:1}, and let $(\FF_n)_{n\in\NN}$ be a sequence
of sub-sigma-algebras of $\FF$ such that
\begin{equation}
(\forall n\in\NN)\quad
\sigma(\boldsymbol{x}_0,\ldots,\boldsymbol{x}_n)\subset
\FF_n\subset\FF_{n+1}.
\end{equation}
Assume that the following are satisfied:
\begin{enumerate}[label=\rm{[\alph*]}]
\item 
\label{t:3a} 
$\inf_{n\in\NN}\lambda_n>0$.
\item
\label{t:3b}
There exists a sequence $(\alpha_n)_{n\in\NN}$ in $\RP$ such that 
$\sum_{n\in\NN}\sqrt{\alpha_n}<\pinf$ and, for every $n\in\NN$,
$\EC{\|\boldsymbol{a}_n\|^2}{\FF_n}\leq\alpha_n$.
\item
\label{t:3iv}
For every $n\in\NN$,
$\mathcal{E}_n=\sigma(\boldsymbol{\varepsilon}_n)$ and $\FF_n$
are independent.
\item
\label{t:3v}
For every $i\in\{1,\ldots,m\}$,
$\mathsf{p}_i=\PP[\varepsilon_{i,0}=1]>0$.
\end{enumerate}
Then the following hold:
\begin{enumerate}
\item 
\label{t:3i}  
Let $(\omega_i)_{1\leq i\leq m}\in\RPP^m$ be such that
\begin{equation}
\label{e:2017-3-3vi}
\begin{cases}
(\forall i\in\{1,\ldots,m\})\quad
\varlimsup\tau_{i,n}<\omega_i\mathsf{p}_i\\
\displaystyle\max_{1\leq i\leq m}\omega_i\mathsf{p}_i=1,
\end{cases}
\end{equation}
set
\begin{equation}
\label{e:defmuntaui}
(\forall n\in\NN)\quad 
\begin{cases}
\xi_n=\alpha_n\displaystyle{\max_{1\leq i\leq m}}\;\omega_i\\
\displaystyle\mu_n= 1-\min_{1\leq i\leq m}
\Big(\mathsf{p}_i-\frac{\tau_{i,n}}{\omega_i}\Big),
\end{cases}
\end{equation}
and define
\begin{equation}
\label{e:2016-7-16a}
(\forall n\in\NN)\quad 
\begin{cases}
\chi_n=1-\lambda_n(1-\mu_n)+\sqrt{\xi_n}\lambda_n 
(1-\lambda_n+\lambda_n\sqrt{\mu_n})\\
\displaystyle\overline{\eta}_n=\sum_{k=0}^n
\Bigg[\prod_{\ell=k+1}^n\chi_\ell\Bigg] 
\lambda_k\big(1-\lambda_k+\lambda_k\sqrt{\mu_k}
+\lambda_k\sqrt{\xi_k}\big)\sqrt{\xi_k}.
\end{cases}
\end{equation}
Then
\begin{equation}
\label{e:decnormw}
(\forall n\in\NN)\;\;
\displaystyle\sum_{i=1}^m\omega_i
\EC{\|x_{i,n+1}-\overline{\mathsf{x}}_i\|^2}{\FF_0}
\leq\Bigg(\prod_{k=0}^n\chi_k\Bigg)\Bigg(\sum_{i=1}^m\omega_i
\|x_{i,0}-\overline{\mathsf{x}}_{i,0}\|^2\Bigg)+
\overline{\eta}_n\quad\as
\end{equation}
\item 
\label{t:3ii}  
Suppose that $\boldsymbol{x}_0\in L^2(\Omega,\FF,\PP;\HHH)$
and $(\forall i\in\{1,\ldots,m\})$
$\varlimsup\tau_{i,n}<1$.
Then
$\EE\|\boldsymbol{x}_n-\overline{\boldsymbol{\mathsf{x}}}\|^2\to
0$ and 
$\boldsymbol{x}_n\to\overline{\boldsymbol{\mathsf{x}}}$ $\as$
\end{enumerate}
\end{theorem}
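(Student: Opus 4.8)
The plan is to recast Algorithm~\ref{algo:1} as a single-block stochastically perturbed iteration of the form~\eqref{e:2014-} on the Hilbert space obtained by endowing $\HHH$ with the inner product $(\boldsymbol{\mathsf{x}},\boldsymbol{\mathsf{y}})\mapsto\sum_{i=1}^m\omega_i\scal{\mathsf{x}_i}{\mathsf{y}_i}$ and the norm $|||\cdot|||$, and then to invoke Theorem~\ref{t:1}. Writing $\boldsymbol{x}_n=(x_{i,n})_{1\leq i\leq m}$, I would set, for every $n\in\NN$ and every $i\in\{1,\ldots,m\}$,
\[
t_{i,n}=(1-\varepsilon_{i,n})x_{i,n}+\varepsilon_{i,n}{\mathsf T}_{\!i,n}\boldsymbol{x}_n
\quad\text{and}\quad
e_{i,n}=\varepsilon_{i,n}a_{i,n},
\]
and $\boldsymbol{t}_n=(t_{i,n})_{1\leq i\leq m}$, $\boldsymbol{e}_n=(e_{i,n})_{1\leq i\leq m}$, which are $\HHH$-valued random variables since each ${\mathsf T}_{\!i,n}$ is measurable. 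Because $\varepsilon_{i,n}\in\{0,1\}$, the update in \eqref{e:2014} is exactly $\boldsymbol{x}_{n+1}=\boldsymbol{x}_n+\lambda_n(\boldsymbol{t}_n+\boldsymbol{e}_n-\boldsymbol{x}_n)$, so condition~\ref{t:1a} of Theorem~\ref{t:1} holds. For condition~\ref{t:1b}, $\varepsilon_{i,n}^2=\varepsilon_{i,n}\leq 1$ gives $|||\boldsymbol{e}_n|||^2=\sum_{i=1}^m\omega_i\varepsilon_{i,n}\|a_{i,n}\|^2\leq(\max_{1\leq i\leq m}\omega_i)\|\boldsymbol{a}_n\|^2$, whence \ref{t:3b} yields $\EC{|||\boldsymbol{e}_n|||^2}{\FF_n}\leq(\max_{1\leq i\leq m}\omega_i)\alpha_n=\xi_n$ with $\sum_{n\in\NN}\sqrt{\xi_n}<\pinf$.

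The heart of the argument is condition~\ref{t:1c}, which I would verify with $\mathsf{z}=\overline{\boldsymbol{\mathsf{x}}}$ and $\theta_n\equiv\nu_n\equiv 0$. Since $\varepsilon_{i,n}^2=\varepsilon_{i,n}$ and $(1-\varepsilon_{i,n})\varepsilon_{i,n}=0$, expanding $t_{i,n}-\overline{\mathsf{x}}_i=(1-\varepsilon_{i,n})(x_{i,n}-\overline{\mathsf{x}}_i)+\varepsilon_{i,n}({\mathsf T}_{\!i,n}\boldsymbol{x}_n-\overline{\mathsf{x}}_i)$ yields $\|t_{i,n}-\overline{\mathsf{x}}_i\|^2=(1-\varepsilon_{i,n})\|x_{i,n}-\overline{\mathsf{x}}_i\|^2+\varepsilon_{i,n}\|{\mathsf T}_{\!i,n}\boldsymbol{x}_n-\overline{\mathsf{x}}_i\|^2$. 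Now $\boldsymbol{x}_n$, hence each factor $\|x_{i,n}-\overline{\mathsf{x}}_i\|^2$ and $\|{\mathsf T}_{\!i,n}\boldsymbol{x}_n-\overline{\mathsf{x}}_i\|^2$, is $\FF_n$-measurable, whereas by \ref{t:3iv}, \ref{t:3v}, and the identical distribution of $(\boldsymbol{\varepsilon}_n)_{n\in\NN}$ the variable $\varepsilon_{i,n}$ is independent of $\FF_n$ with $\EE\varepsilon_{i,n}=\mathsf{p}_i$. Applying $\EC{\cdot}{\FF_n}$ and summing over $i$,
\[
\EC{|||\boldsymbol{t}_n-\overline{\boldsymbol{\mathsf{x}}}|||^2}{\FF_n}
=\sum_{i=1}^m\omega_i\Big((1-\mathsf{p}_i)\|x_{i,n}-\overline{\mathsf{x}}_i\|^2
+\mathsf{p}_i\|{\mathsf T}_{\!i,n}\boldsymbol{x}_n-\overline{\mathsf{x}}_i\|^2\Big)\quad\as
\]
Because $\omega_i\mathsf{p}_i\leq 1$ by \eqref{e:2017-3-3vi}, the second group of terms is bounded by $\sum_{i=1}^m\|{\mathsf T}_{\!i,n}\boldsymbol{x}_n-\overline{\mathsf{x}}_i\|^2=\|\boldsymbol{\mathsf T}_{\!n}\boldsymbol{x}_n-\overline{\boldsymbol{\mathsf{x}}}\|^2$, which \eqref{e:1} bounds in turn by $\sum_{i=1}^m\tau_{i,n}\|x_{i,n}-\overline{\mathsf{x}}_i\|^2$. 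Collecting terms gives $\EC{|||\boldsymbol{t}_n-\overline{\boldsymbol{\mathsf{x}}}|||^2}{\FF_n}\leq\sum_{i=1}^m\big(\omega_i(1-\mathsf{p}_i)+\tau_{i,n}\big)\|x_{i,n}-\overline{\mathsf{x}}_i\|^2\leq\mu_n|||\boldsymbol{x}_n-\overline{\boldsymbol{\mathsf{x}}}|||^2$ with $\mu_n$ as in \eqref{e:defmuntaui}, and the first line of \eqref{e:2017-3-3vi} (with $m<\pinf$) gives $\varlimsup\mu_n<1$.

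It then remains to read off the conclusions. Assertion~\ref{t:3i} follows from Theorem~\ref{t:1}\ref{t:1i} applied in $(\HHH,|||\cdot|||)$: since $\theta_n\equiv\nu_n\equiv 0$, the quantities $\chi_n$ and $\overline{\eta}_n$ of Theorem~\ref{t:1} reduce precisely to those in \eqref{e:2016-7-16a}, and discarding the nonnegative term $\overline{\vartheta}_n$ produces \eqref{e:decnormw}. For~\ref{t:3ii}, when $\varlimsup\tau_{i,n}<1$ for every $i$ the choice $\omega_i=1/\mathsf{p}_i$ satisfies \eqref{e:2017-3-3vi}, so we are in the setting of~\ref{t:3i}; moreover $\boldsymbol{x}_0\in L^2(\Omega,\FF,\PP;\HHH)$ and $\nu_n\equiv 0$ make \eqref{e:sumsqrtllanu} and $(\nu_n)_{n\in\NN}\in\ell_+^{1/2}(\mathscr{F})$ trivially true, so Theorem~\ref{t:1}\ref{t:1iia} and \ref{t:1iib} give $\EE|||\boldsymbol{x}_n-\overline{\boldsymbol{\mathsf{x}}}|||^2\to 0$ and $\boldsymbol{x}_n\to\overline{\boldsymbol{\mathsf{x}}}$ $\as$; equivalence of the norms $|||\cdot|||$ and $\|\cdot\|$ on $\HHH$ (valid since $m<\pinf$ and each $\omega_i>0$) transfers both to $\|\cdot\|$. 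I expect the main obstacle to be the conditional-expectation identity for $|||\boldsymbol{t}_n-\overline{\boldsymbol{\mathsf{x}}}|||^2$ displayed above — specifically, using \ref{t:3iv} to pull $\varepsilon_{i,n}$ out against the $\FF_n$-measurable factors, and then invoking $\omega_i\mathsf{p}_i\leq 1$ to bring \eqref{e:1} to bear — together with the bookkeeping that matches the reduced $\chi_n$, $\overline{\eta}_n$ with \eqref{e:2016-7-16a}.
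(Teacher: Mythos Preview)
Your proposal is correct and follows essentially the same route as the paper: recast \eqref{e:2014} as $\boldsymbol{x}_{n+1}=\boldsymbol{x}_n+\lambda_n(\boldsymbol{t}_n+\boldsymbol{e}_n-\boldsymbol{x}_n)$ in $(\HHH,|||\cdot|||)$, verify conditions \ref{t:1a}--\ref{t:1c} of Theorem~\ref{t:1} with $\theta_n\equiv\nu_n\equiv 0$ and $\mu_n$, $\xi_n$ as in \eqref{e:defmuntaui}, and then read off \ref{t:3i} from Theorem~\ref{t:1}\ref{t:1i} and \ref{t:3ii} from Theorem~\ref{t:1}\ref{t:1ii} with $\omega_i=1/\mathsf{p}_i$. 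The only cosmetic difference is that the paper computes $\EC{|||\boldsymbol{t}_n-\overline{\boldsymbol{\mathsf{x}}}|||^2}{\FF_n}$ via a partition over $\mathsf{D}$, whereas you use the identity $\|t_{i,n}-\overline{\mathsf{x}}_i\|^2=(1-\varepsilon_{i,n})\|x_{i,n}-\overline{\mathsf{x}}_i\|^2+\varepsilon_{i,n}\|{\mathsf T}_{\!i,n}\boldsymbol{x}_n-\overline{\mathsf{x}}_i\|^2$ directly; both lead to the same identity \eqref{e:2014-02-18} and the same bound.
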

\begin{proof}
\ref{t:3i}:
We are going to apply Theorem~\ref{t:1} in the Hilbert
space $(\HHH,|||\cdot|||)$ defined by \eqref{e:2014-02-14}. Set
\begin{equation}
\label{e:2014-02-16}
(\forall n\in\NN)\quad
\boldsymbol{t}_n=\big(x_{i,n}+\varepsilon_{i,n}({\mathsf T}_{\!i,n}\,
\boldsymbol{x}_n-x_{i,n})\big)_{1\leq i\leq m}
\quad\text{and}\quad
\boldsymbol{e}_n=(\varepsilon_{i,n}a_{i,n})_{1\leq i\leq m}.
\end{equation}
Then it follows from \eqref{e:2014} that
\begin{equation}
\label{e:2014-02-17a}
(\forall n\in\NN)\quad
\boldsymbol{x}_{n+1}=\boldsymbol{x}_{n}+\lambda_n\big(
\boldsymbol{t}_n+\boldsymbol{e}_n-\boldsymbol{x}_{n}\big),
\end{equation}
while \ref{t:3b} implies that
\begin{equation}
\label{e:2014-02-17b}
(\forall n\in\NN)\qquad
\EC{|||\boldsymbol{e}_n|||^2}{\FF_n}\leq
\EC{|||\boldsymbol{a}_n|||^2}{\FF_n}\leq 
\alpha_n\max_{1\leq i\leq m}\omega_i=\xi_n.
\end{equation}
We note that it also follows from \ref{t:3b} that $\sum_{n\in\NN}
\sqrt{\xi_n}<\pinf$. Now define
\begin{equation}
\label{e:2014-04-13a}
(\forall n\in\NN)(\forall i\in\{1,\ldots,m\})\quad
\mathsf{q}_{i,n}\colon\HHH\times\mathsf{D}\to\RR\colon
(\boldsymbol{\mathsf{x}},\boldsymbol{\epsilon})\mapsto
\|\mathsf{x}_i-\overline{\mathsf{x}}_i+
\epsilon_i({\mathsf T}_{\!i,n}\,
\boldsymbol{\mathsf{x}}-\mathsf{x}_i)\|^2.
\end{equation}
Then, for every $n\in\NN$ and every $i\in\{1,\ldots,m\}$, the
measurability of ${\mathsf T}_{\!i,n}$ implies that of the
functions 
$(\mathsf{q}_{i,n}(\cdot,\boldsymbol{\epsilon}))%
_{\boldsymbol{\epsilon}\in\mathsf{D}}$. However,
for every $n\in\NN$, \ref{t:3iv} asserts that the events 
$([\boldsymbol{\varepsilon}_n=
\boldsymbol{\epsilon}])_{\boldsymbol{\epsilon}\in\mathsf{D}}$
constitute an almost sure partition of $\Omega$ and are independent 
from $\FF_n$, while the random variables 
$(\mathsf{q}_{i,n}(\boldsymbol{x}_n,\boldsymbol{\epsilon}))%
_{\substack{1\leq i\leq m\\\boldsymbol{\epsilon}\in\mathsf{D}}}$ 
are $\FF_n$-measurable. Therefore, we derive from 
\cite[Section~28.2]{LoevII} that
\begin{align}
\label{e:2014-04-15c}
&\hskip -6mm
(\forall n\in\NN)(\forall i\in\{1,\ldots,m\})\quad
\EC{\|x_{i,n}+\varepsilon_{i,n}({\mathsf T}_{\!i,n}\,
\boldsymbol{x}_n-x_{i,n})-\overline{\mathsf{x}}_{i}\|^2}{\FF_n}
\nonumber\\
&\hskip 53mm=\EEC{\mathsf{q}_{i,n}(\boldsymbol{x}_n,\boldsymbol
{\varepsilon}_n)\sum_{\boldsymbol{\epsilon}\in\mathsf{D}}
1_{[\boldsymbol{\varepsilon}_n=\boldsymbol{\epsilon}]}}{\FF_n}
\nonumber\\
&\hskip 53mm=\sum_{\boldsymbol{\epsilon}\in\mathsf{D}}
\EC{\mathsf{q}_{i,n}(\boldsymbol{x}_n,\boldsymbol{\epsilon})
1_{[\boldsymbol{\varepsilon}_n=\boldsymbol{\epsilon}]}}{\FF_n}
\nonumber\\
&\hskip 53mm=\sum_{\boldsymbol{\epsilon}\in\mathsf{D}}
\EC{1_{[\boldsymbol{\varepsilon}_n=\boldsymbol{\epsilon}]}}{\FF_n}
\mathsf{q}_{i,n}(\boldsymbol{x}_n,\boldsymbol{\epsilon})\nonumber\\
&\hskip 53mm=\sum_{\boldsymbol{\epsilon}\in\mathsf{D}}
\PP[\boldsymbol{\varepsilon}_n=\boldsymbol{\epsilon}]
\mathsf{q}_{i,n}(\boldsymbol{x}_n,\boldsymbol{\epsilon})\quad\as
\end{align}
Combining this identity with
\eqref{e:2014-02-14}, \eqref{e:2014-02-16}, \ref{t:3v},
\eqref{e:2017-3-3vi}, and \eqref{e:1} yields
\begin{align}
\label{e:2014-02-18}
(\forall n\in\NN)\quad
&\EC{|||\boldsymbol{t}_n-\overline{\boldsymbol{\mathsf{x}}}|||^2}
{\FF_n}\nonumber\\
&=\sum_{i=1}^m\omega_i
\EE\Big(\big\|x_{i,n}+\varepsilon_{i,n}({\mathsf T}_{\!i,n}\,
\boldsymbol{x}_n-x_{i,n})-\overline{\mathsf{x}}_i\big\|^2
~\Big|~\FF_n\Big)\nonumber\\
&=\sum_{i=1}^m\omega_i
\sum_{\boldsymbol{\epsilon}\in\mathsf{D}}
\PP[\boldsymbol{\varepsilon}_n=\boldsymbol{\epsilon}]
\mathsf{q}_{i,n}(\boldsymbol{x}_n,\boldsymbol{\epsilon})
\nonumber\\
&=\sum_{i=1}^m\omega_i
\left(\sum_{\boldsymbol{\epsilon}\in\mathsf{D},\epsilon_i=1}
\PP[\boldsymbol{\varepsilon}_n=\boldsymbol{\epsilon}]\,
\|\mathsf{T}_{\!i,n}\,\boldsymbol{x}_n-\overline{\mathsf{x}}_i\|^2+
\sum_{\substack{\boldsymbol{\epsilon}\in\mathsf{D},\,\epsilon_i=0}}
\PP[\boldsymbol{\varepsilon}_n=\boldsymbol{\epsilon}]\,
\|x_{i,n}-\overline{\mathsf{x}}_i\|^2\right)\nonumber\\
&=\sum_{i=1}^m\omega_i\mathsf{p}_i
\|\mathsf{T}_{\!i,n}\,\boldsymbol{x}_n-\overline{\mathsf{x}}_i\|^2+
\sum_{i=1}^m\omega_i(1-\mathsf{p}_i)
\|x_{i,n}-\overline{\mathsf{x}}_i\|^2\nonumber\\
&\leq\bigg(\max_{1\leq i\leq m}\omega_i\mathsf{p}_i\bigg)
\sum_{i=1}^m\|\mathsf{T}_{\!i,n}\,\boldsymbol{x}_n-
\overline{\mathsf{x}}_i\|^2+\sum_{i=1}^m\omega_i(1-\mathsf{p}_i)
\|x_{i,n}-\overline{\mathsf{x}}_i\|^2\nonumber\\
&=|||\boldsymbol{x}_n-\overline{\boldsymbol{\mathsf{x}}}|||^2+
\|\boldsymbol{\mathsf T}_{\!n}\boldsymbol{x}_n-
\overline{\boldsymbol{\mathsf{x}}}\|^2
-\sum_{i=1}^m\omega_i\mathsf{p}_i
\|x_{i,n}-\overline{\mathsf{x}}_i\|^2\nonumber\\
&\leq|||\boldsymbol{x}_n-\overline{\boldsymbol{\mathsf{x}}}|||^2+
\sum_{i=1}^m (\tau_{i,n}-\omega_i\mathsf{p}_i)
\|x_{i,n}-\overline{\mathsf{x}}_{i}\|^2\nonumber\\
&=\sum_{i=1}^m\omega_i\Big(1+\frac{\tau_{i,n}}
{\omega_i}-\mathsf{p}_i\Big)\|x_{i,n}-
\overline{\mathsf{x}}_{i}\|^2\nonumber\\
&\leq\left(1-\min_{1\leq i\leq m}\Big(\mathsf{p}_i
-\frac{\tau_{i,n}}{\omega_i}\Big)\right)\,
|||\boldsymbol{x}_n-\overline{\boldsymbol{\mathsf{x}}}|||^2\quad\as
\end{align}
Altogether, properties \ref{t:1a}--\ref{t:1c}
of Theorem~\ref{t:1} are satisfied with 
\begin{equation}
\label{e:defxin}
(\forall n\in\NN)\quad\theta_n=\nu_n=0.
\end{equation}
On the other hand, it follows from \eqref{e:2017-3-3vi} and
\eqref{e:defmuntaui} that $\varlimsup\mu_n<1$. Hence, we derive
from Theorem~\ref{t:1}\ref{t:1i} that
\begin{equation}
(\forall n\in\NN)\quad
\EC{|||\boldsymbol{x}_{n+1}-\overline{\boldsymbol{\mathsf{x}}}|||^2}
{\FF_0}\leq\Bigg(\prod_{k=0}^n\chi_k\Bigg)
|||\boldsymbol{x}_0-\overline{\boldsymbol{\mathsf{x}}}|||^2+
\overline{\eta}_n\quad\as
\end{equation}

\ref{t:3ii}: Consider \ref{t:3i} when 
$(\forall i\in\{1,\ldots,m\})$ $\omega_i=1/\mathsf{p}_i$.
The convergence then follows from the inequalities 
\begin{equation}
\label{e:normeq}
(\forall\boldsymbol{\mathsf{x}}\in\HHH)\quad
\min_{1\leq i\leq m}\mathsf{p}_i\, |||\boldsymbol{\mathsf{x}}
|||\leq\|\boldsymbol{\mathsf{x}}\|\leq\max_{1\leq i\leq m}
\mathsf{p}_i\, |||\boldsymbol{\mathsf{x}}|||
\end{equation}
and Theorem~\ref{t:1}\ref{t:1ii}.
\end{proof}

\subsection{Linear convergence}
As an offspring of the results in Section~\ref{se:mainres}, we
obtain the following perturbed linear convergence result.

\begin{corollary}
\label{c:eqnormconvTn}
Consider the setting of Problem~\ref{prob:1} and 
Algorithm~\ref{algo:1}, suppose that \ref{t:3a}-\ref{t:3v} in
Theorem~\ref{t:3} are satisfied, and define $(\chi_n)_{n\in\NN}$ 
and $(\overline{\eta}_n)_{n\in\NN}$ as in 
\eqref{e:2016-7-16a}, where
\begin{equation}
\label{e:9fh}
\max_{1\leq i\leq m}\varlimsup\tau_{i,n}<1
\quad\text{and}\quad(\forall n\in\NN)\quad 
\begin{cases}
\xi_n=\dfrac{\alpha_n}{\displaystyle\min_{1\leq i\leq m}
\mathsf{p}_i}\\
\displaystyle\mu_n= 1-\min_{1\leq i\leq m}\mathsf{p}_i
\big(1-\tau_{i,n}\big).
\end{cases}
\end{equation}
Then
\begin{equation}
(\forall n\in\NN)\quad
\displaystyle\EC{\|\boldsymbol{x}_{n+1}-
\overline{\boldsymbol{\mathsf{x}}}\|^2}{\FF_0}
\leq\frac{\displaystyle\max_{1\leq i\leq m}\mathsf{p}_i}
{\displaystyle\min_{1\leq i\leq m} 
\mathsf{p}_i}\Bigg(\prod_{k=0}^n\chi_k\Bigg)
\|\boldsymbol{x}_0-\overline{\boldsymbol{\mathsf{x}}}\|^2+
\overline{\eta}_n\quad\as
\end{equation}
\end{corollary}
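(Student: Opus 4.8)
The plan is to read off Corollary~\ref{c:eqnormconvTn} from Theorem~\ref{t:3}\ref{t:3i} by making the single specific choice of weights $\omega_i=1/\mathsf{p}_i$, $i\in\{1,\ldots,m\}$, and then to convert the resulting estimate, which lives in the weighted norm $|||\cdot|||$ of \eqref{e:2014-02-14}, back into the canonical norm $\|\cdot\|$ of $\HHH$ through the norm equivalence \eqref{e:normeq}. This mirrors the way Theorem~\ref{t:3}\ref{t:3ii} was obtained, except that here the numerical constants have to be tracked.

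First I would check that $\omega_i=1/\mathsf{p}_i$ is admissible in Theorem~\ref{t:3}\ref{t:3i}. With this choice $\omega_i\mathsf{p}_i=1$ for every $i$, so the normalization $\max_{1\le i\le m}\omega_i\mathsf{p}_i=1$ holds, and the condition $\varlimsup\tau_{i,n}<\omega_i\mathsf{p}_i$ in \eqref{e:2017-3-3vi} becomes exactly the hypothesis $\max_{1\le i\le m}\varlimsup\tau_{i,n}<1$ of \eqref{e:9fh} (from which one also gets $\varlimsup\mu_n<1$, as in the proof of Theorem~\ref{t:3}). Next I would verify that, under $\omega_i=1/\mathsf{p}_i$, the auxiliary sequences of \eqref{e:defmuntaui} collapse onto those of \eqref{e:9fh}: since $\max_{1\le i\le m}\omega_i=(\min_{1\le i\le m}\mathsf{p}_i)^{-1}$ one has $\xi_n=\alpha_n/\min_{1\le i\le m}\mathsf{p}_i$, and since $\mathsf{p}_i-\tau_{i,n}/\omega_i=\mathsf{p}_i(1-\tau_{i,n})$ one has $\mu_n=1-\min_{1\le i\le m}\mathsf{p}_i(1-\tau_{i,n})$. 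Consequently the sequences $\chi_n$ and $\overline{\eta}_n$ produced by \eqref{e:2016-7-16a} are literally those in the corollary, and \eqref{e:decnormw} reads
\begin{equation}
(\forall n\in\NN)\quad\EC{|||\boldsymbol{x}_{n+1}-\overline{\boldsymbol{\mathsf{x}}}|||^2}{\FF_0}\le\Bigg(\prod_{k=0}^n\chi_k\Bigg)|||\boldsymbol{x}_0-\overline{\boldsymbol{\mathsf{x}}}|||^2+\overline{\eta}_n\quad\as
\end{equation}

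It then remains to pass to $\|\cdot\|$. From $|||\boldsymbol{\mathsf{x}}|||^2=\sum_{i=1}^m\mathsf{p}_i^{-1}\|\mathsf{x}_i\|^2$ one reads off the two elementary bounds $\|\boldsymbol{\mathsf{x}}\|^2\le(\max_{1\le i\le m}\mathsf{p}_i)\,|||\boldsymbol{\mathsf{x}}|||^2$ and $|||\boldsymbol{\mathsf{x}}|||^2\le(\min_{1\le i\le m}\mathsf{p}_i)^{-1}\|\boldsymbol{\mathsf{x}}\|^2$, i.e.\ the squared form of \eqref{e:normeq}. Applying the first to $\boldsymbol{x}_{n+1}-\overline{\boldsymbol{\mathsf{x}}}$, then the display above, then the second to $\boldsymbol{x}_0-\overline{\boldsymbol{\mathsf{x}}}$, and finally observing that the $\mathsf{p}_i$ are probabilities, so that $\max_{1\le i\le m}\mathsf{p}_i\le1$ and the factor $\max_{1\le i\le m}\mathsf{p}_i$ in front of $\overline{\eta}_n$ may be dropped, yields the asserted inequality. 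The whole argument is bookkeeping; the only point that needs care is to confirm that the two rescalings—the one turning \eqref{e:defmuntaui} into \eqref{e:9fh} and the one relating $|||\cdot|||$ to $\|\cdot\|$—are mutually consistent, so that $\chi_n$ and $\overline{\eta}_n$ in the statement are exactly the quantities delivered by Theorem~\ref{t:3} and the leftover factor $\max_i\mathsf{p}_i$ on $\overline{\eta}_n$ is indeed harmless. No genuine obstacle is expected.
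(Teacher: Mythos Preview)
Your proposal is correct and follows exactly the paper's own argument: the paper's proof is the one-liner ``In view of \eqref{e:normeq}, the claim follows from Theorem~\ref{t:3}\ref{t:3i} applied with $(\forall i\in\{1,\ldots,m\})$ $\omega_i=1/\mathsf{p}_i$,'' and you have simply spelled out the verifications (admissibility of the weights, reduction of \eqref{e:defmuntaui} to \eqref{e:9fh}, and the squared norm comparison yielding the factor $\max_i\mathsf{p}_i/\min_i\mathsf{p}_i$ together with $\max_i\mathsf{p}_i\le 1$ on $\overline{\eta}_n$) that the paper leaves implicit.
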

\begin{proof}
In view of \eqref{e:normeq}, the claim follows from 
Theorem~\ref{t:3}\ref{t:3i} applied with 
$(\forall i\in\{1,\ldots,m\})$ $\omega_i=1/\mathsf{p}_i$.
\end{proof}

Let us now make some observations to assess the consequences of
Corollary~\ref{c:eqnormconvTn} in terms of bounds on convergence 
rates, and the potential impact of the activation probabilities  
of the blocks $(\mathsf{p}_i)_{1\leq i\leq m}$ 
on them. Let us consider the case when $\alpha_n\equiv 0$, 
i.e., when there are no errors. Set
\begin{equation}
\label{e:bordel}
(\forall n\in\NN)\quad\chi_n=1-\lambda_n\min_{1\leq i\leq m}
\mathsf{p}_i(1-\tau_{i,n}).
\end{equation}
Then we derive from \eqref{e:2016-7-16a} and \eqref{e:9fh} that 
\begin{equation}
(\forall n\in\NN)\quad
\EC{\|\boldsymbol{x}_{n+1}-
\overline{\boldsymbol{\mathsf{x}}}\|^2}{\FF_0}
\leq\frac{\displaystyle\max_{1\leq i\leq m} 
\mathsf{p}_i}{\displaystyle\min_{1\leq i\leq m}\mathsf{p}_i} 
\Bigg(\prod_{k=0}^n\chi_k\Bigg)\|\boldsymbol{x}_0-
\overline{\boldsymbol{\mathsf{x}}}\|^2\quad\as
\end{equation}
Since \eqref{e:9fh} yields $\sup_{n\in\NN}\chi_n<1$, 
a linear convergence rate is thus obtained. 

For simplicity, let us further assume that the blocks are processed 
uniformly in the sense that $(\forall i\in\{1,\ldots,m\})$
$\mathsf{p}_i=\mathsf{p}$. Set
\begin{equation}
\chi=1-\inf_{n\in\NN}\bigg(\lambda_n
\Big(1-\max_{1\leq i\leq m}\tau_{i,n}\Big)\bigg)\in\lzeroun.
\end{equation}
Then
\begin{equation}
\label{e:tau}
(\forall n\in\NN)\quad\chi_n=1-\lambda_n\mathsf{p}\Big(
1-\max_{1\leq i\leq m}\tau_{i,n}\Big) \le 1-(1-\chi)\mathsf{p}.
\end{equation}
When $\mathsf{p}=1$, the upper bound in \eqref{e:tau} on the
convergence rate is minimal and equal to $\chi$. This is consistent
with the intuition that
frequently activating the coordinates should favor the convergence
speed as a function of the iteration number.  
On the other hand, activating the blocks less frequently induces a
reduction of the computational load per iteration. In
large scale problems, this reduction may actually be imposed 
by limited computing or memory resources. In
Algorithm~\ref{algo:1}, the cost of computing  
${\mathsf T}_{\!i,n}(x_{1,n},\ldots,x_{m,n})$ is on the average
$\mathsf{p}$ times smaller than in the standard non
block-coordinate approach. Hence, if we assume that this cost 
is independent of $i$ and the iteration
number $n$, $N$ iterations of the block-coordinate algorithm
have the same computational cost as $\mathsf{p}N$ iterations
of a non block-coordinate approach. In view of \eqref{e:tau}, let us
introduce the quantity
\begin{equation}
\varrho(\mathsf{p})=-\frac{\ln\big(1-(1-{\chi})\mathsf{p}\big)}
{\mathsf{p}}
\end{equation}
to evaluate the convergence rate normalized by the
probability $\mathsf{p}$ accounting for computational cost.
Under the above assumptions, \eqref{e:tau} yields
\begin{equation}
(\forall n\in\NN)\quad\prod_{k=0}^n\chi_k\leq
\exp\big(-\varrho(\mathsf{p})\mathsf{p}(n+1)\big).
\end{equation}
Elementary calculations show that, if ${\chi}\neq 0$,
\begin{equation}
\label{e:unifboundrate}
-\frac{1-{\chi}}{\ln{\chi}}\leq
\frac{\varrho(\mathsf{p})}{\varrho(1)}\leq 1.
\end{equation}
For example, if ${\chi}>0.2$, then
$\varrho(\mathsf{p})/\varrho(1)\in[0.49,1]$. 
This shows that, for values of ${\chi}$ not too small, the decrease
in the normalized convergence rate remains limited with respect to
a deterministic approach in which all the blocks are activated. This
fact is illustrated by Figure~\ref{fig:1}, where the graph of
$\varrho$ is plotted for several values of $\chi$.

\begin{figure}[htb]
\centering
\includegraphics[width=15cm]{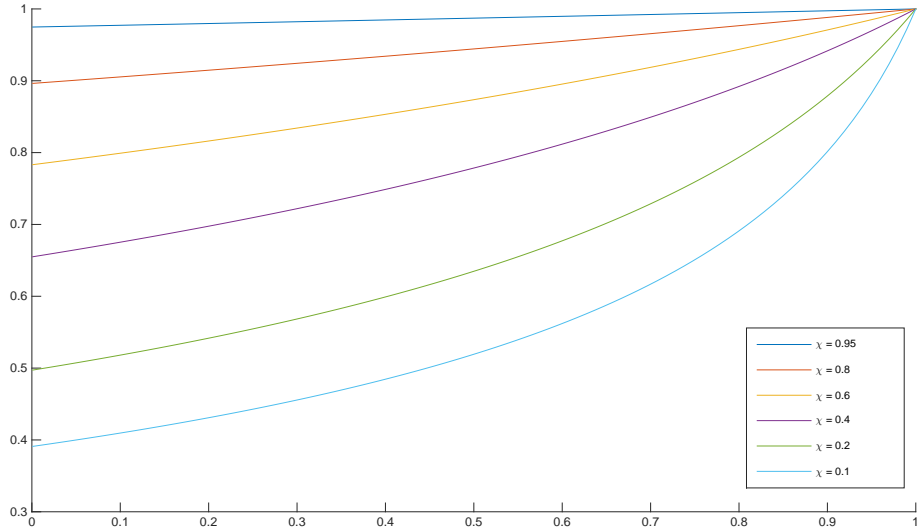}
\caption{Variations of $\varrho(\mathsf{p})/\varrho(\mathsf{1})$
as a function of $\mathsf{p}$ for various values of $\chi$.}
\label{fig:1}
%\vspace*{-1cm}
\end{figure}

\begin{remark}
Let us consider the special case in which, for every 
$i\in \{1,\ldots,m\}$, $\tau_{i,n} \equiv
\tau_i$. Then \eqref{e:bordel} becomes
\begin{equation}
(\forall n\in\NN)\quad\chi_n=1-\lambda_n\min_{1\leq i\leq m}
\mathsf{p}_i(1-\tau_{i}).
\end{equation}
Now, let us further assume that, at each iteration $n$, only one of
the operators $(\mathsf{T}_{i,n})_{1\leq i\leq m}$ is activated
randomly. In this case, $\sum_{i=1}^m\mathsf{p}_i=1$ and
choosing
\begin{equation}
(\forall i \in \{1,\ldots,m\})\quad \mathsf{p}_i =
\frac{(1-\tau_i)^{-1}}{\sum_{j=1}^m (1-\tau_j)^{-1}}.
\end{equation}
leads to a minimum value of $\chi_n$.
\end{remark}

\section{Applications}
\label{sec:4}

In variational analysis, commonly encountered operators include
resolvent of monotone operators, projection operators, proximity 
operators of convex functions, gradient operators, and
various compositions and combinations thereof \cite{Livre1,Rock09}.
Specific instances of such operators used in iterative processes
which satisfy property \eqref{e:1} can be found in 
\cite{Baus16,Livre1,Baus12,Botr17,Botr15,Opti14,Pesq12,Rock76,%
Rock09,Sibo70}. 
In this section we highlight a couple of examples in the area of
splitting methods for systems of monotone inclusions. The
notation is that used in Problem~\ref{prob:1}. In addition, let
$\mathsf{A}\colon\HH\to 2^{\HH}$ be a set-valued operator.
We denote by 
$\zer\mathsf{A}=\menge{\mathsf{x}\in\HH}{0\in\mathsf{A}\mathsf{x}}$
the set of zeros of $\mathsf{A}$ and by 
$\mathsf{J}_{\mathsf{A}}=(\Id+\mathsf{A})^{-1}$ the resolvent of
$\mathsf{A}$. Recall that, if $\mathsf{A}$ is maximally
monotone, then $\mathsf{J}_{\mathsf{A}}$ is defined everywhere on 
$\HH$ and nonexpansive \cite{Livre1}. In the particular case when
$\mathsf{A}$ is the Moreau subdifferential $\partial \mathsf{f}$
of a proper lower semicontinuous convex function 
$\mathsf{f}\colon\HH\to\RX$, $\mathsf{J}_{\mathsf{A}}$
is the proximity operator $\prox_{\mathsf{f}}$ of $\mathsf{f}$ 
\cite{Livre1,Mor62b}.

\begin{example}
\label{ex:jbb}
For every $i\in\{1,\ldots,m\}$, let $\mathsf{A}_i\colon\HH\to
2^{\HH}$ be a maximally monotone operator, and consider the
coupled inclusion problem
\begin{equation}
\label{e:2017-04-18a}
\text{find}\;\boldsymbol{\mathsf{x}}
=(\mathsf{x}_i)_{1\leq i\leq m}\in\HHH\;\text{such that}\;
\begin{cases}
0\in\mathsf{A}_1\mathsf{x}_1+\mathsf{x}_1-\mathsf{x}_2\\
0\in\mathsf{A}_2\mathsf{x}_2+\mathsf{x}_2-\mathsf{x}_3\\
\hskip 4mm\vdots&\\
0\in\mathsf{A}_{m-1}\mathsf{x}_{m-1}+\mathsf{x}_{m-1}-\mathsf{x}_m\\
0\in\mathsf{A}_m\mathsf{x}_m+\mathsf{x}_m-\mathsf{x}_1.
\end{cases}
\end{equation}
For instance, in the case when each $\mathsf{A}_i$ is the normal 
cone operator to a nonempty closed convex set, 
\eqref{e:2017-04-18a} models limit
cycles in the method of periodic projections \cite{Bail12}. 
Another noteworthy instance is when
$m=2$, $\mathsf{A}_1=\partial\mathsf{f}_1$, and
$\mathsf{A}_2=\partial\mathsf{f}_2$, where
$\mathsf{f}_1$ and $\mathsf{f}_2$ are proper lower semicontinuous
functions from $\HH$ to $\RX$. Then \eqref{e:2017-04-18a} reduces 
to the joint minimization problem
\begin{equation}
\label{e:2017-04-18b}
\minimize{(\mathsf{x}_1,\mathsf{x}_2)\in\HH^2}
{\mathsf{f}_1(\mathsf{x}_1)+\mathsf{f}_2(\mathsf{x}_2)+
\frac{1}{2}\|\mathsf{x}_1-\mathsf{x}_2\|^2},
\end{equation}
studied in \cite{Acke80}. Now set
\begin{equation}
\label{e:2017-04-18c}
\boldsymbol{\mathsf{A}}\colon\boldsymbol{\mathsf{x}}
\mapsto(\mathsf{A}_1\mathsf{x}_1,\ldots,\mathsf{A}_m\mathsf{x}_m)
\quad\text{and}\quad
\boldsymbol{\mathsf{B}}\colon\boldsymbol{\mathsf{x}}
\mapsto(\mathsf{x}_1-\mathsf{x}_2,\mathsf{x}_2-\mathsf{x}_3,
\ldots,\mathsf{x}_m-\mathsf{x}_1).
\end{equation}
Then it follows from \cite[Proposition~20.23]{Livre1} that 
$\boldsymbol{\mathsf{A}}$ is maximally monotone. On the other
hand, $\boldsymbol{\mathsf{B}}$ is linear, bounded, and monotone
since 
\begin{equation}
\label{e:tonylevin}
(\forall\boldsymbol{\mathsf{x}}\in\HHH)\quad
\scal{\boldsymbol{\mathsf{B}}\boldsymbol{\mathsf{x}}}
{\boldsymbol{\mathsf{x}}}=\dfrac{\|\boldsymbol{\mathsf{B}}
\boldsymbol{\mathsf{x}}\|^2}{2}\geq 0. 
\end{equation}
It is therefore maximally
monotone \cite[Example~20.34]{Livre1}. Altogether, 
$\boldsymbol{\mathsf{A}}+\boldsymbol{\mathsf{B}}$ is maximally
monotone by \cite[Corollary~25.5(i)]{Livre1}. In addition,
suppose that each $\mathsf{A}_i$ is strongly monotone with
constant 
$\delta_i\in\RPP$. Then $\boldsymbol{\mathsf{A}}$ is strongly
monotone with constant $\min_{1\leq i\leq m}\delta_i$, and so
is $\boldsymbol{\mathsf{A}}+\boldsymbol{\mathsf{B}}$. We
therefore deduce from 
\cite[Corollary~23.37(ii)]{Livre1} that it possesses 
a unique zero $\overline{\boldsymbol{\mathsf{x}}}$, which is 
the unique solution to \eqref{e:2017-04-18a}. Let us
also note that, for every $i\in\{1,\ldots,m\}$, the resolvent 
$\mathsf{J}_{\mathsf{A}_i}$ is Lipschitz 
continuous with constant $\eta_{i}=1/(1+\delta_i)\in\zeroun$ 
\cite[Proposition~23.13]{Livre1}. Next, define
$\boldsymbol{\mathsf T}\colon\HHH\to\HHH\colon
\boldsymbol{\mathsf{x}}\mapsto({\mathsf T}_{\!i}\,
\boldsymbol{\mathsf{x}})_{1\leq i\leq m}$, where,
for every $i\in\{1,\ldots,m\}$, 
$\mathsf{T}_{\!i}\colon\HHH\to\HH_i\colon\boldsymbol{\mathsf{x}}
\mapsto\mathsf{J}_{\mathsf{A}_i}\mathsf{x}_{i+1}$, with
the convention $\mathsf{x}_{m+1}=\mathsf{x}_1$. Then
we derive from \eqref{e:2017-04-18a} that 
$\boldsymbol{\mathsf T}\overline{\boldsymbol{\mathsf{x}}}=
\overline{\boldsymbol{\mathsf{x}}}$. Moreover,
\begin{align}
\label{e:2}
(\forall n\in\NN)(\forall\boldsymbol{\mathsf{x}}\in\HHH)
\quad\|\boldsymbol{\mathsf{T}}\boldsymbol{\mathsf{x}}-
\overline{\boldsymbol{\mathsf{x}}}\|^2
&=\sum_{i=1}^m\|\mathsf{J}_{\mathsf{A}_i}\mathsf{x}_{i+1}-
\overline{\mathsf{x}}_i\|^2\nonumber\\
&=\sum_{i=1}^m\|\mathsf{J}_{\mathsf{A}_i}\mathsf{x}_{i+1}-
\mathsf{J}_{\mathsf{A}_i}\overline{\mathsf{x}}_{i+1}\|^2
\nonumber\\
&\leq\sum_{i=1}^m\eta^2_{i}
\|\mathsf{x}_{i+1}-\overline{\mathsf{x}}_{i+1}\|^2, 
\end{align}
which shows that \eqref{e:1} is satisfied upon choosing 
$\boldsymbol{\mathsf{T}}_n\equiv\boldsymbol{\mathsf{T}}$
and, for every $i\in \{1,\ldots,m\}$, $\tau_{i,n}\equiv\eta_i^2$.
In this scenario, Algorithm~\ref{algo:1} becomes
\begin{equation}
\label{e:2017-04-18x}
\begin{array}{l}
\text{for}\;n=0,1,\ldots\\
\left\lfloor
\begin{array}{l}
\text{for}\;i=1,\ldots,m-1\\
\left\lfloor
\begin{array}{l}
x_{i,n+1}=x_{i,n}+\varepsilon_{i,n}\lambda_n\big(
\mathsf{J}_{\mathsf{A}_i}x_{i+1,n}+a_{i,n}-x_{i,n}\big)
\end{array} 
\right.\\[1mm]
x_{m,n+1}=x_{m,n}+\varepsilon_{m,n}\lambda_n\big(
\mathsf{J}_{\mathsf{A}_m}x_{1,n}+a_{m,n}-x_{m,n}\big),
\end{array} 
\right.
\end{array} 
\end{equation}
and Theorem~\ref{t:3} describes its asymptotic behavior.
In the particular case of \eqref{e:2017-04-18b}, for $f_1$ and
$f_2$ strongly convex, \eqref{e:2017-04-18x} with 
$\lambda_n\equiv 1$ and no error, reduces to
\begin{equation}
\label{e:2017-04-18y}
\begin{array}{l}
\text{for}\;n=0,1,\ldots\\
\left\lfloor
\begin{array}{l}
x_{1,n+1}=x_{1,n}+\varepsilon_{1,n}\big(
\mathsf{prox}_{\mathsf{f}_1}x_{2,n}-x_{1,n}\big)\\[1mm]
x_{2,n+1}=x_{2,n}+\varepsilon_{2,n}\big(
\mathsf{prox}_{\mathsf{f}_2}x_{1,n}-x_{2,n}\big).
\end{array} 
\right.
\end{array} 
\end{equation}
In the deterministic setting in which $\varepsilon_{1,n}\equiv 1$
and $\varepsilon_{2,n}\equiv 1$, the resulting sequence
$(x_{2,n})_{n\in\NN}$ is that produced by the alternating
proximity operator method of \cite{Acke80}, further studied in 
\cite{Nona05}.
\end{example}

\begin{example}
\label{ex:7l}
We consider an $m$-agent model investigated in \cite{Sico10}. 
For every $i\in\{1,\ldots,m\}$, let 
$\mathsf{A}_i\colon\HH_i\to 2^{\HH_i}$ be a maximally monotone
operator modeling some abstract utility of agent $i$ and
let $\mathsf{B}_i\colon\HHH\to\HH_i$ be a coupling operator.
It is assumed that the operator
$\boldsymbol{\mathsf B}\colon\HHH\to\HHH\colon
\boldsymbol{\mathsf{x}}\mapsto({\mathsf B}_{i}\,
\boldsymbol{\mathsf{x}})_{1\leq i\leq m}$ is
$\beta$-cocoercive \cite{Livre1} for some $\beta\in\RPP$, that 
is,
\begin{equation}
(\forall\boldsymbol{\mathsf{x}}\in\HHH)
(\forall\boldsymbol{\mathsf{y}}\in\HHH)\quad
\scal{\boldsymbol{\mathsf{x}}-\boldsymbol{\mathsf{y}}}
{\boldsymbol{\mathsf B}\boldsymbol{\mathsf{x}}-
\boldsymbol{\mathsf B}\boldsymbol{\mathsf{y}}}\geq
\beta\|\boldsymbol{\mathsf B}\boldsymbol{\mathsf{x}}-
\boldsymbol{\mathsf B}\boldsymbol{\mathsf{y}}\|^2.
\end{equation}
The equilibrium problem is to 
\begin{equation}
\label{e:sico10}
\text{find}\;\boldsymbol{\mathsf{x}}\in\HHH
\;\;\text{such that}\;\;(\forall i\in\{1,\ldots,m\})\quad 0\in
\mathsf{A}_i\mathsf{x}_i+\mathsf{B}_i\big(\mathsf{x}_1,\ldots,
\mathsf{x}_m\big).
\end{equation}
For every $i\in\{1,\ldots,m\}$, let us further assume that
$\mathsf{A}_i$ is $\delta_i$-strongly monotone for some 
$\delta_i\in\RPP$ or, equivalently, that 
$\mathsf{M}_i=\mathsf{A}_i-\delta_i\Id$ is monotone. 
Since $\boldsymbol{\mathsf B}$ is maximally monotone
\cite[Example~20.31]{Livre1}, arguing as in 
Example~\ref{ex:jbb}, we arrive at the conclusion that 
$\boldsymbol{\mathsf A}+\boldsymbol{\mathsf B}$ has exactly one
zero $\overline{\boldsymbol{\mathsf{x}}}$, and that 
$\overline{\boldsymbol{\mathsf{x}}}$ is the unique solution 
to \eqref{e:sico10}. Let
\begin{equation}
\label{e:choicebetaFB}
\delta=\min_{1\leq i\leq m}\delta_i,
\quad\text{and}\quad(\forall n\in\NN)\quad
\theta_n\in [0,\delta]\quad\text{and}\quad
\gamma_n\in\RPP.
\end{equation}
Set
\begin{equation}
(\forall n\in\NN)\quad
\begin{cases}
\boldsymbol{\mathsf C}_n\colon\HHH\to 2^\HHH\colon
\boldsymbol{\mathsf x}\mapsto\underset{i=1}{\overset{m}{\Cart}}
\big(\mathsf{M}_i+(\delta_{i}-\theta_n)\Id\big)\mathsf{x}_i\\
\boldsymbol{\mathsf D}_n=\boldsymbol{\mathsf B}+\theta_n\ID\\[1mm]
\boldsymbol{\mathsf{T}}_n=
\boldsymbol{\mathsf{J}}_{\gamma_n\boldsymbol{\mathsf C}_n}
\circ(\ID-\gamma_n\boldsymbol{\mathsf D}_n).
\end{cases}
\end{equation}
Now let $n\in\NN$. We first observe that
\begin{equation}
\zer\big(\gamma_n\boldsymbol{\mathsf C}_n+
\gamma_n\boldsymbol{\mathsf D}_n\big)
=\zer\big(\boldsymbol{\mathsf A}+\boldsymbol{\mathsf B}\big)
=\{\overline{\boldsymbol{\mathsf{x}}}\}
=\Fix\boldsymbol{\mathsf{T}}_n,
\end{equation}
and derive from \cite[Proposition~23.17(i)]{Livre1} that 
\begin{equation}
\boldsymbol{\mathsf{J}}_{\gamma_n\boldsymbol{\mathsf C}_n}\colon
\boldsymbol{\mathsf{x}}\mapsto
\left(\mathsf{J}_{\frac{\gamma_n\mathsf{M}_i}
{1+\gamma_n(\delta_i-\theta_n)}}
\left(\frac{\mathsf{x}_i}{1+\gamma_n(\delta_i-\theta_n)}\right)
\right)_{1\leq i\leq m}.
\end{equation}
Hence \eqref{e:choicebetaFB} entails that 
$\boldsymbol{\mathsf{J}}_{\gamma_n\boldsymbol{\mathsf C}_n}$ is
Lipschitz continuous with constant 
$1/(1+\gamma_n({\delta}-\theta_n))$. 
On the other hand, since $\boldsymbol{\mathsf B}$ is 
$\beta$-cocoercive, there exists a nonexpansive operator
$\boldsymbol{\mathsf{R}}\colon\HHH\to\HHH$ such that 
$\beta\boldsymbol{\mathsf B}=(\ID+\boldsymbol{\mathsf{R}})/2$
\cite[Remark~4.34(iv)]{Livre1}. We have 
\begin{equation}
\ID-\gamma_n\boldsymbol{\mathsf D}_n=
\left(1-\gamma_n\theta_n-\frac{\gamma_n}{2\beta}\right)
\ID-\frac{\gamma_n}{2\beta}\boldsymbol{\mathsf R}.
\end{equation}
In turn, a Lipschitz constant of
$\ID-\gamma_n\boldsymbol{\mathsf D}_n$ is
$|1-\gamma_n(\theta_n+1/(2\beta))|+\gamma_n/(2\beta)$, and hence 
one for $\boldsymbol{\mathsf{T}}_{\!n}$ is
\begin{equation}
\zeta_n=\frac{\big|1-\gamma_n\big(\theta_n+1/(2\beta)\big)\big|+
\gamma_n/(2\beta)}{1+\gamma_n(\delta-\theta_n)}.
\end{equation}
Note that 
\begin{equation}
\zeta_n=
\begin{cases}
\dfrac{1-\gamma_n\theta_n}{1+\gamma_n(
\delta-\theta_n)}<1,&\text{if}\;\;
\gamma_n\leq\dfrac{2\beta}{1+2\beta\theta_n};\\[6mm]
\dfrac{\gamma_n(\theta_n+1/\beta)-1}
{1+\gamma_n(\delta-\theta_n)}<1,&\text{if}\;\;
\dfrac{2\beta}{1+2\beta\theta_n}<\gamma_n<
\dfrac{2\beta}{1+\beta(2\theta_n-\delta)}.
\end{cases}
\end{equation}
Consequently, imposing
\begin{equation}
\gamma_n<\dfrac{2\beta}{1+\beta(2\theta_n-\delta)}
\end{equation}
places us in the framework of Problem~\ref{prob:1} with
$(\forall i\in\{1,\ldots,m\})$ $\tau_{i,n}=\zeta^2_n$.
Algorithm~\ref{algo:1} for solving \eqref{e:sico10}, that is,
\begin{equation}
\label{e:main12}
\begin{array}{l}
\text{for}\;n=0,1,\ldots\\
\left\lfloor
\begin{array}{l}
\text{for}\;i=1,\ldots,m\\
\left\lfloor
\begin{array}{l}
\displaystyle x_{i,n+1}=x_{i,n}+\varepsilon_{i,n}\lambda_n
\bigg(\mathsf{J}_{\frac{\gamma_n\mathsf{M}_i}
{1+\gamma_n(\delta_i-\theta_n)}}\bigg(\frac{(1-\gamma_n
\theta_n)x_{i,n}-\gamma_n\mathsf{B}_i\boldsymbol{x}_n}
{1+\gamma_n(\delta_i-\theta_n)}\bigg)+a_{i,n}-x_{i,n}\bigg),
\end{array}
\right.
\end{array}
\right.\\
\end{array}
\end{equation}
is then an instance of the block-coordinate forward-backward
algorithm of \cite[Section~5.2]{Siop15}. Its convergence
properties in the present setting are given in Theorem~\ref{t:3}.
\end{example}

\begin{remark}
In view of \eqref{e:tonylevin}, \eqref{e:2017-04-18a} constitutes 
a special case of \eqref{e:sico10} and it can also be
solved via \eqref{e:main12}. In Example~\ref{ex:jbb}, we have
exploited the special structure of $\boldsymbol{\mathsf{B}}$ to
obtain tighter coefficients 
$(\tau_{i,n})_{1\leq i\leq m, n\in\NN}$ in \eqref{e:1}.
\end{remark}

\begin{example}
Let $\boldsymbol{\mathsf{g}}\colon\HHH\to\RR$ be a convex
function which is differentiable with a $\beta^{-1}$-Lipschitzian
gradient for some $\beta\in\RPP$ and, for every
$i\in\{1,\ldots,m\}$, let
$\mathsf{f}_i\colon\HH_i\to\RX$ be a proper lower semicontinuous
$\delta_i$-strongly convex function for some $\delta_i\in\RPP$. 
We consider the optimization problem
\begin{equation}
\label{e:pboptstcon}
\minimize{\mathsf{x}_1\in\HH_1,\ldots,\mathsf{x}_m\in\HH_m}
{\sum_{i=1}^m\mathsf{f}_i(\mathsf{x}_i)+\boldsymbol{\mathsf{g}}
(\mathsf{x}_1,\ldots,\mathsf{x}_m)}.
\end{equation}
Then it results from standard facts
\cite[Section~28.5]{Livre1} that this problem is the 
special case of Example~\ref{ex:7l} in which 
$\boldsymbol{\mathsf B}=\nabla\boldsymbol{\mathsf g}$
and, for every $i\in\{1,\ldots,m\}$,
$\mathsf{A}_i=\partial\mathsf{f}_i$.
Now set $(\forall i\in\{1,\ldots,m\})$
$\mathsf{h}_i=\mathsf{f}_i-\delta_i\|\cdot\|^2/2$. Then 
\eqref{e:main12} assumes the form 
\begin{equation}
\label{e:main122}
\begin{array}{l}
\text{for}\;n=0,1,\ldots\\
\left\lfloor
\begin{array}{l}
\text{for}\;i=1,\ldots,m\\
\left\lfloor
\begin{array}{l}
\displaystyle x_{i,n+1}=x_{i,n}+\varepsilon_{i,n}\lambda_n
\left(\prox_{\frac{\gamma_n\mathsf{h}_i}
{1+\gamma_n(\delta_i-\theta_n)}}
\bigg(\frac{(1-\gamma_n\theta_n)x_{i,n}-\gamma_n
\nabla_{\!i}\,\boldsymbol{\mathsf{g}}(\boldsymbol{x}_n)}
{1+\gamma_n(\delta_i-\theta_n)}\bigg)+a_{i,n}-x_{i,n}\right),
\end{array}
\right.
\end{array}
\right.\\
\end{array}
\end{equation}
where $\nabla_{\!i}\,\boldsymbol{\mathsf{g}}\colon\HHH\to\HH_i$ is
the $i$th component of $\nabla\boldsymbol{\mathsf{g}}$. 
\end{example}

\begin{remark}
In the case of a non block-coordinate implementation, i.e., $m=1$,
a mean-square convergence result for the forward-backward algorithm
can be found in \cite{Rosa16} under different assumptions than 
ours and, in particular, the requirement that the proximal 
parameters $(\gamma_n)_{n\in\NN}$ must go to $0$.
\end{remark}

\begin{remark}
In connection with the linear convergence of \eqref{e:main122} 
deriving from Corollary~\ref{c:eqnormconvTn}, let us note that a
similar result was obtained in \cite{Rich14} by imposing the
restrictions 
\begin{equation}
(\forall i\in\{1,\ldots,m\})\quad
\HH_i=\RR^{N_i},\quad\mathsf{p}_i=\frac{1}{m},
\quad\text{and}\quad(\forall n\in\NN)\quad
\lambda_n=1\quad\text{and}\quad a_{i,n}=0.
\end{equation}
In this specific setting the proximal parameter in \cite{Rich14} 
was chosen differently for each block: it is not allowed to vary 
with the iteration $n$ as in \eqref{e:main122}, but it can be
chosen differently for each $i$. 
In the case when $(\forall i \in \{1,\ldots,m\})$ $\mathsf{f}_i=0$, 
more freedom was given to the choice of $(\mathsf{p}_i)_{1\leq
i\leq m}$ in \cite{Rich14}, but by still activating only one block 
at each iteration.
Further narrowing the problem to the minimization of a 
smooth strongly convex function on $\RR^N$,
a coordinate descent method is proposed in \cite{Rich16}
which requires, for every $i\in \{1,\ldots,m\}$, $\HH_i=\RR$ and
allows for multiple coordinates to be randomly updated at 
each iteration, as in \eqref{e:main122}.
\end{remark}


\begin{thebibliography}{99}
\setlength{\itemsep}{1pt} 

\bibitem{Acke80}
F. Acker and M. A. Prestel,
Convergence d'un sch\'ema de minimisation altern\'ee,
{\em Ann. Fac. Sci. Toulouse V. S\'er. Math.},
vol. 2, pp. 1--9, 1980.

\bibitem{Atch17}
Y. F. Atchad\'e, G. Fort, and E. Moulines, 
On perturbed proximal gradient algorithms, 
{\em J. Mach. Learn. Res.,}
vol. 18, pp. 1--33, 2017.

\bibitem{Sico10}  
H. Attouch, L. M. Brice\~no-Arias, and P. L. Combettes,
A parallel splitting method for coupled monotone inclusions,
{\em SIAM J. Control Optim.,}
vol. 48, pp. 3246--3270, 2010. 

\bibitem{Bail12}  
J.-B. Baillon, P. L. Combettes, and R. Cominetti,
There is no variational characterization of the cycles in the 
method of periodic projections,
{\em J. Funct. Anal.,}
vol. 262,  pp. 400--408, 2012.

\bibitem{Baus16}
H. H. Bauschke, J. Y. Bello Cruz, T. T. A. Nghia, H. M. Phan, and 
X. Wang,
Optimal rates of linear convergence of relaxed alternating
projections and generalized Douglas-Rachford methods for
two subspaces,
{\em Numer. Algorithms,}
vol. 73, pp. 33--76, 2016.

\bibitem{Livre1} 
H. H. Bauschke and P. L. Combettes,
{\em Convex Analysis and Monotone Operator Theory in Hilbert 
Spaces,} 2nd ed. Springer, New York, 2017.

\bibitem{Nona05}
H. H. Bauschke, P. L. Combettes, and S. Reich,
The asymptotic behavior of the composition of two resolvents,
{\em Nonlinear Anal.},
vol. 60, pp. 283--301, 2005.

\bibitem{Baus12}
H. H. Bauschke, S. M. Moffat, and X. Wang,
Firmly nonexpansive mappings and maximally monotone operators:
Correspondence and duality,
{\em Set-Valued Var. Anal.,}
vol. 20, pp. 131--153, 2012.

\bibitem{Botr17}
R. I. Bo{\c{t}} and E. R. Csetnek, 
Convergence rates for forward-backward dynamical systems associated 
with strongly monotone inclusions, 
{\em J. Math. Anal. Appl.,}
vol. 457, pp. 1135--1152, 2018.

\bibitem{Botr15}
R. I. Bo{\c{t}}, E. R. Csetnek, A. Heinrich, and C. Hendrich,
On the convergence rate improvement of a primal-dual splitting
algorithm for solving monotone inclusion problems, 
{\em Math. Programming,}
vol. 150, pp. 251--279, 2015.

\bibitem{Siop15}
P. L. Combettes and J.-C. Pesquet, 
Stochastic quasi-Fej\'er block-coordinate fixed point iterations 
with random sweeping,
{\em SIAM J. Optim.,}
vol. 25, pp. 1221--1248, 2015.

\bibitem{Pafa16}
P. L. Combettes and J.-C. Pesquet, 
Stochastic approximations and perturbations in
forward-backward splitting for monotone operators,
{\em Pure Appl. Funct. Anal.,}
vol. 1, pp. 13--37, 2016.

\bibitem{Opti14}
P. L. Combettes and B. C. V\~u,
Variable metric forward-backward splitting with applications 
to monotone inclusions in duality,
{\em Optimization,}
vol. 63, pp. 1289--1318, 2014.

\bibitem{Fort95}
R. M. Fortet,
{\em Vecteurs, Fonctions et Distributions Al\'eatoires dans les
Espaces de Hilbert.}
Herm\`es, Paris, 1995.

\bibitem{Ledo91}
M. Ledoux and M. Talagrand,
{\em Probability in Banach Spaces: Isoperimetry and Processes}.
Springer, New York, 1991.

\bibitem{LoevII} 
M. Lo\`eve,
{\em Probability Theory II},
4th ed. 
Springer, New York, 1978.

\bibitem{Mor62b} 
J. J. Moreau, 
Fonctions convexes duales et points proximaux dans un 
espace hilbertien,
{\em C. R. Acad. Sci. Paris S\'er. A Math.,}
vol. 255, pp. 2897--2899, 1962.

\bibitem{Nest12}
Yu. Nesterov,
Efficiency of coordinate descent methods on
huge-scale optimization problems,
{\em SIAM J. Optim.,}
vol. 22, pp. 341--362, 2012.

\bibitem{Pesq12}
J.-C. Pesquet and N. Pustelnik,
A parallel inertial proximal optimization method,
{\em Pac. J. Optim.,}
vol. 8, pp. 273--305, 2012.

\bibitem{Rich14}
P. Richt\'arik and M. Tak{\'a}{\v{c}}, 
Iteration complexity of randomized block-coordinate descent 
methods for minimizing a composite function,  
{\em Math. Program.,}
vol. A144, pp. 1--38, 2014.

\bibitem{Rich16}
P. Richt\'arik and M. Tak{\'a}{\v{c}}, 
On optimal probabilities in stochastic coordinate descent methods,
{\em Optim. Lett.}, vol. 10,  pp 1233--1243, 2016.

\bibitem{Rock76} 
R. T. Rockafellar, 
Monotone operators and the proximal point algorithm,
{\em SIAM J. Control Optim.,}
vol. 14, pp. 877--898, 1976.

\bibitem{Rock09} 
R. T. Rockafellar and R. J. B. Wets, 
{\em Variational Analysis,} 3rd printing.
Springer-Verlag, New York, 2009.

\bibitem{Rosa16}
L. Rosasco, S. Villa, and B. C. V\~u, 
Stochastic forward-backward splitting method for solving monotone
inclusions in Hilbert spaces, 
{\em J. Optim. Theory Appl.,}
vol. 169, pp. 388--406, 2016.

\bibitem{Sch93b} 
L. Schwartz,
{\em Analyse III -- Calcul Int\'egral.}
Hermann, Paris, 1993.

\bibitem{Sibo70} 
M. Sibony, 
M\'ethodes it\'eratives pour les \'equations et in\'equations aux 
d\'eriv\'ees partielles non lin\'eaires de type monotone,
{\em Calcolo,}
vol. 7, pp. 65--183, 1970.

\end{thebibliography}
\end{document}